\newtheorem{theorem}{Theorem}
\newtheorem{lemma}[theorem]{Lemma}
\newtheorem{proposition}{Proposition}
\theoremstyle{definition}
\theoremstyle{remark}
\begin{document}
\title[Almost everywhere divergence of Bochner-Riesz means]
{On almost everywhere divergence of Bochner-Riesz means on compact Lie groups}
\date{September 1, 2016}
\author{Xianghong Chen and Dashan Fan}
\address{X. Chen\\Department of Mathematical Sciences\\University of Wisconsin-Milwaukee\\Milwaukee, WI 53211, USA}
\email{chen242@uwm.edu}
\address{D. Fan\\Department of Mathematical Sciences\\University of Wisconsin-Milwaukee\\Milwaukee, WI 53211, USA}
\email{fan@uwm.edu}
\thanks{}

\subjclass[2010]{43A22, 43A32, 43A25, 42B25}
\keywords{Bochner-Riesz means, Fourier series, compact Lie groups, almost everywhere divergence, localization}
\dedicatory{}
\commby{}

\begin{abstract}
Let $G$ be a connected, simply connected, compact semisimple Lie group of dimension $n$. It has been shown by Clerc \cite{Clerc1974} that, for any $f\in L^1(G)$, the Bochner-Riesz mean $S_R^\delta(f)$ converges almost everywhere to $f$, provided $\delta>(n-1)/2$. In this paper, we show that, at the critical index $\delta=(n-1)/2$, there exists an $f\in L^1(G)$ such that 
$$\limsup_{R\rightarrow\infty} \big|S_{R}^{(n-1)/2}(f)(x)\big|=\infty,
\ \text{a.e.}\ x\in G.$$
This is an analogue of a well-known result of Kolmogorov \cite{Kolmogoroff1923} for Fourier series on the circle, and a result of Stein \cite{Stein1961} for Bochner-Riesz means on the tori $\mathbb T^{n}, n\geq 2$. We also study localization properties of the Bochner-Riesz mean $S_{R}^{(n-1)/2}(f)$ for $f\in L^1(G)$.
\end{abstract}
\maketitle

\section{Introduction}
Let $\mathbb T^n$ be the $n$-dimensional torus. For any $f\in L^{1}(\mathbb T^n)$ one can consider its (formal) Fourier series expansion
\begin{equation}\label{intro:fourier-series}
f(x)\sim \sum_{k \in\mathbb{Z}^n} \widehat f(k ) e^{2\pi i \langle k ,x\rangle},
\ x\in\mathbb T^n
\end{equation}
where $\langle k ,x\rangle=k _1 x_1 + \cdots + k _n x_n$, 
$\widehat f(k )=\int_{\mathbb T^n} f(x) e^{-2\pi i \langle k ,x\rangle} dx$ is the $k $-th Fourier coefficient of $f$. To understand the convergence of the Fourier series \eqref{intro:fourier-series}, Bochner \cite{Bochner1936} studied its spherical Riesz means of order $\delta$ defined by
$$S_{R}^{\delta}(f)(x)=\sum_{\left| k \right| <R}
\left(1-\frac{\left| k \right|^{2}}{R^{2}}\right)^{\delta}
\widehat f(k ) e^{2\pi i \langle k ,x\rangle},\ R>1.$$
It is well known that when $\delta>\delta_0=(n-1)/2$, $S_{R}^{\delta}(f)$ converges almost everywhere to $f$ as $R\rightarrow\infty$. In the case $n=1$, $\delta_0=0$ and $S_{R}^{\delta_0}(f)$ becomes the partial sum of the Fourier series \eqref{intro:fourier-series}. A famous result of Kolmogorov \cite{Kolmogoroff1923} states that there exists an $f\in L^1(\mathbb T)$ such that
$$\limsup_{R\rightarrow \infty} |S_{R}^{\delta_0}(f)(x)|=\infty,\ \text{a.e.}\ x\in\mathbb T.$$
Stein \cite{Stein1961} extended Kolmogorov's result to $n\ge 2$ and revealed several new features in multiple dimensions (see \cite{SteinWeiss1971} for an exposition).

For a general compact Lie group $G$ of dimension $n$, by the Peter-Weyl theorem, it is natural to consider, for any $f\in L^1(G)$, the formal Fourier series expansion
\begin{equation}\label{intro:fourier-series-group}
f\sim \sum_{\lambda \in \Lambda } d_{\lambda}\chi_{\lambda }*f
\end{equation}
where $d_{\lambda}$ and $\chi_{\lambda}$ are resp. the dimension and character of the corresponding irreducible representation of $G$. In this paper, we will consider the case where $G$ is noncommutative. The case where $G$ is commutative can be reduced to the case of torus discussed above. We will further assume that $G$ is \textit{connected, simply connected, and semisimple}. 

In this setting, Clerc \cite{Clerc1974} studied the Bochner-Riesz means defined by
$$S_{R}^{\delta}(f)=\sum_{\lambda\in\Lambda:\atop \left|\lambda+\rho\right|<R}
\left(1-\frac{\left| \lambda+\rho\right| ^{2}}{R^{2}}\right)^{\delta}
d_{\lambda}\chi_{\lambda }*f,\ R>1$$
where $\rho$ is half the sum of the positive roots (see Section \ref{sec:preliminaries} for details). Generalizing the result on the torus, he showed that $S_{R}^{\delta}(f)$ converges almost everywhere to $f$, provided $\delta>(n-1)/2$. At the critical index $\delta=(n-1)/2$, Zalo\v{z}nik
\cite{Zaloznik1988} showed that the convergence holds for $f$ belonging to certain block spaces $B_{q}(G)$ strictly contained in $L^{1}(G)$. To the best of our knowledge, it remains an open question whether the convergence holds for all $f\in L^{1}(G)$. Here we give a negative answer, showing that the Kolmogorov-Stein divergence theorem extends to this setting.

\begin{theorem}\label{thm:kolmogorov-L1}
There exists an $f\in L^1(G)$ such that
$$\limsup_{R\rightarrow\infty} \big|S_R^{(n-1)/2}(f)(x)\big|=\infty,
\ \text{a.e.}\ x\in G.$$
\end{theorem}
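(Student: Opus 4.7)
The plan is to adapt Stein's classical argument to the compact Lie group setting. Since each $S_R^{(n-1)/2}$ is a central convolution operator on $G$ and therefore commutes with both left and right translations, Stein's maximal principle applies: the existence of some $f\in L^1(G)$ with $\limsup_{R\to\infty}|S_R^{(n-1)/2}f|=\infty$ almost everywhere will follow once we show that the maximal operator $S_*^{(n-1)/2}f(x):=\sup_{R>1}|S_R^{(n-1)/2}f(x)|$ fails to map $L^1(G)$ into weak $L^1(G)$. The offending $f$ is then produced by the standard Nikishin-Stein construction: a lacunary sum of translates of normalized functions witnessing the failure, combined with a Borel-Cantelli argument that upgrades blow-up in measure to blow-up almost everywhere.

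The core of the proof is therefore a sharp asymptotic for the central kernel
$$K_R^{(n-1)/2}(x)=\sum_{\lambda\in\Lambda:|\lambda+\rho|<R}\left(1-\frac{|\lambda+\rho|^2}{R^2}\right)^{(n-1)/2}d_\lambda\chi_\lambda(x)$$
near the identity of $G$. Restricting to a maximal torus $T=\exp\mathfrak t$, I would invoke the Weyl character and dimension formulas to rewrite $d_\lambda\chi_\lambda(\exp H)$ as an alternating Weyl-group sum divided by the Weyl denominator $A_\rho(H)$, and then exchange the sum over $\lambda$ with the Weyl-group symmetrization. Poisson summation converts the resulting lattice sum on $\mathfrak t$ into a sum of integrals, and the zero-frequency contribution, after factoring the asymptotic $A_\rho(H)\sim i^{|\Delta^+|}\prod_{\alpha>0}\langle\alpha,H\rangle$, can be identified (modulo smooth factors that are bounded and nonvanishing at $H=0$) with a derivative image of the Euclidean Bochner-Riesz kernel at the matching critical index. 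From this identification one obtains the decisive lower bound $\|K_R^{(n-1)/2}\|_{L^1(G)}\gtrsim\log R$, mirroring the classical Bessel-function computation in $\mathbb R^n$.

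Equipped with this local comparison, I would then test $S_R^{(n-1)/2}$ against a sequence of bump functions $\phi_\epsilon$ concentrated in small geodesic balls about the identity and normalized so that $\|\phi_\epsilon\|_1=1$. The kernel asymptotic forces $S_R^{(n-1)/2}\phi_\epsilon$ to inherit, on a set of positive measure, the oscillatory logarithmically large behavior of the Euclidean kernel, so that $\|S_R^{(n-1)/2}\phi_\epsilon\|_{L^{1,\infty}(G)}\gtrsim \log R$ for $R=R(\epsilon)$ large. This contradicts any uniform weak-type $(1,1)$ inequality for the maximal operator, and the proof is completed by assembling $f$ via the Nikishin-Stein scheme.

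The principal technical obstacle is securing the kernel asymptotic. Unlike on the torus, the non-abelian structure of $G$ requires one to contend with the Weyl denominator, whose zeros lie along the walls of the Weyl chambers and obstruct a naive application of Poisson summation. One must excise neighborhoods of these walls, apply Poisson summation on each open chamber, and quantify the contribution of the excised region---a delicate matter precisely because $\delta=(n-1)/2$ is the threshold at which the relevant oscillatory integrals are barely integrable. Once this local comparison with the Euclidean Bochner-Riesz kernel is in hand, the remaining steps---failure of the weak-type inequality and the Nikishin-Stein assembly---proceed by soft functional-analytic arguments standard in the Kolmogorov-Stein framework.
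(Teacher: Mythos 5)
Your soft framework (Stein's maximal principle plus the lacunary/Nikishin--Stein assembly) is consistent with what the paper does, and the kernel analysis via the Weyl character formula and Poisson summation is essentially Clerc's decomposition, which the paper indeed uses. But the decisive step of your argument --- disproving the weak-type $(1,1)$ bound for the maximal operator by testing against bump functions $\phi_\epsilon$ concentrated at the identity and invoking $\|K_R^{(n-1)/2}\|_{L^1(G)}\gtrsim\log R$ --- does not work, and the claimed inequality $\|S_R^{(n-1)/2}\phi_\epsilon\|_{L^{1,\infty}(G)}\gtrsim\log R$ is false. A large $L^1$ norm of the kernel does not force a large $L^{1,\infty}$ quasinorm: from Clerc's asymptotics one has the pointwise bound $\sup_{R>1}|K_R^{(n-1)/2}(\exp\xi)|\preceq |\xi|^{-n}+|D(\xi)|^{-1}$ on the fundamental domain, so the maximal kernel is finite a.e.\ (as the paper emphasizes in the introduction) and lies in $L^{1,\infty}(G)$ with norm $O(1)$; the logarithm in $\|K_R\|_{L^1}$ comes entirely from the barely nonintegrable tail $|\xi|^{-n}$, which is invisible at the weak-$L^1$ level. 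This is exactly the Dirichlet-kernel phenomenon on the circle ($\|D_N\|_{L^1}\sim\log N$ but $\|D_N\|_{L^{1,\infty}}=O(1)$), and it is why the authors say Stein's $\mathbb T^n$, $n\ge2$, argument (which rests on $\limsup_R|K_R(x)|=\infty$ a.e.) does not transfer. Indeed, by the Christ--Sogge theorem cited at the end of the paper, the individual operators $S_R^{(n-1)/2}$ are uniformly of weak type $(1,1)$ on $L^1(G)$, so no single normalized bump at a single scale $R$ can ever witness the failure you need.

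The missing idea is the correct choice of testing objects for the maximal operator. The paper replaces $\phi_\epsilon$ by empirical measures $\mu=\frac1N\sum_{j=1}^N\delta_{y_j}$ of i.i.d.\ uniform points: a Glivenko--Cantelli argument guarantees that $\frac1N\sum_j k_{R_0}(xy_j^{-1})\succeq\log R_0$ for all $x$ in a set of measure $>1-\varepsilon$ (Lemmas \ref{lem:glivenko-cantelli-varadarajan}--\ref{lem:kolmogorov-1}), and Kronecker's theorem (using that the radii $|\xi_j(x)|$ are a.s.\ rationally independent, Lemma \ref{lem:kronecker}) produces, for each such $x$, arbitrarily large $R$ at which all the oscillating factors $\cos(R|\xi_j(x)|-\tfrac{n\pi}{2})$ in the Bessel asymptotics align, yielding $\limsup_R|\widetilde K_R^{\delta_0}*\mu(x)|\succeq\log R_0$ on that set (Lemma \ref{lem:kolmogorov-M}). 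It is the simultaneous constructive interference over many well-spread point masses --- a Kolmogorov/Kahane circle-type mechanism, not a Stein torus-type one --- that defeats the weak-type bound for the maximal operator; without it, your proof cannot get off the ground. The remaining reduction (discarding the error $O(1/|D(x)|)$ by Young's inequality, and the lacunary construction of $f$ from the measures $\mu_j$) then proceeds much as you outline.
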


Recall that Stein \cite{Stein1961} proved the divergence theorem based on the fact that the Bochner-Riesz kernel $K_R^{\delta_0}(x)$ is unbounded almost everywhere, i.e.
$$\limsup_{R\rightarrow\infty} \left |\sum_{\left| k \right| <R}
\left(1-\frac{\left| k \right|^{2}}{R^{2}}\right)^{\frac{n-1}{2}}
e^{2\pi i \langle k ,x\rangle}\right |=\infty,
\ \text{a.e.}\ x\in \mathbb T^n.$$
The situation is quite different in the noncommutative setting. In fact, it follows from estimates of Clerc \cite{Clerc1974} that 
$$\sup_{R>1} \left |\sum_{\lambda\in\Lambda:\atop \left|\lambda+\rho\right|<R}
\left(1-\frac{\left| \lambda+\rho\right| ^{2}}{R^{2}}\right)^{\frac{n-1}{2}}
d_{\lambda}\chi_{\lambda }(x)\right |<\infty,\ \text{a.e.}\ x\in G.$$
This prevents us from adapting directly Stein's argument for $\mathbb T^n, n\ge2$ to prove Theorem \ref{thm:kolmogorov-L1}. However, as shown in Clerc \cite{Clerc1974}, one can use the Poisson summation formula to decompose $K_R^{\delta_0}(x)$ into a sum of a Dirichlet type kernel and a kernel that can be bounded uniformly by an integrable function on $G$. By Young's inequality one can then focus only on the Dirichlet kernel, a situation reminiscent of Kolmogorov's divergence theorem for the circle. 

While Kolmogorov's proof in \cite{Kolmogoroff1923} does not seem to carry over to the current setting, an alternative proof by Stein turns out to be very useful here. In fact, using an idea of Stein in \cite{Stein1961} (see also \cite{SteinWeiss1971}), to prove Theorem \ref{thm:kolmogorov-L1} it suffices to obtain a divergence estimate with $f$ replaced by suitable finitely supported measures. Using a Glivenko-Cantelli theorem, we show that the empirical measures on $G$ provide the desired estimate. These measures have also been used by Kahane \cite{Kahane1960/1961} to give a probabilistic proof of Kolmogorov's divergence theorem.

The paper is organized as follows. In Section \ref{sec:preliminaries} we introduce some notations and give some preliminaries that will be used in the proof of Theorem \ref{thm:kolmogorov-L1}. In Section \ref{sec:transference} we prove Theorem \ref{thm:kolmogorov-L1} assuming a divergence estimate in terms of measures, which is stated as Lemma \ref{lem:kolmogorov-M}. In Section \ref{sec:random-points} we prove Lemma \ref{lem:kolmogorov-M} using empirical measures on $G$. In Section \ref{sec:remarks} we conclude the paper with some remarks on localization.

The notation $A\preceq B$ means $A\leq CB$ for some constant $C>0$ independent of the variables being considered. $C$ denotes a constant depending only on $G$ whose value may change from line to line.

\section{Notation and Preliminaries}\label{sec:preliminaries}

Throughout the paper,  $G$ will be a connected, simply connected, compact semisimple Lie group of dimension $n$.
Let $\mathfrak{g}$ be the Lie algebra of $G$ and $\mathfrak{t}$ the Lie algebra of a fixed maximal torus $T$ of $G$ of dimension $m$.
Let $A$ be a system of positive roots for $(\mathfrak{g},\mathfrak{t})$, so that card$(A)=(n-m)/2$, and let $\rho=\frac{1}{2}\sum_{\alpha\in A}\alpha$.

Let $\langle\cdot,\cdot\rangle$ and $|\cdot|$ denote resp. the inner product and norm  on $\mathfrak{g}$ induced by the negative of the Killing form $B$ on $\mathfrak{g}_{_\mathbb{C}}$,
the complexification of $\mathfrak{g}$. Note that $\langle\cdot,\cdot\rangle$ induces a bi-invariant metric $d$ on $G$.
Since $B$ restricted to $\mathfrak{t}_{_\mathbb{C}}$ is nondegenerate, for any $\lambda \in (\mathfrak{t}_{_\mathbb{C}})^*$
there is a unique\ $\xi_{\lambda}\in\mathfrak{t}_{_\mathbb{C}}$ such that $\lambda (\xi)=B(\xi,\xi_{\lambda }),\ \forall \xi\in \mathfrak{t}_{_\mathbb{C}}$.
We will identify elements in $(\mathfrak{t}_{_\mathbb{C}})^*$ with elements in $\mathfrak{t}_{_\mathbb{C}}$ using this canonical isomorphism.

Let $e$ be the identity in $G$, and $\Gamma=\{\gamma\in \mathfrak{t}, \exp \gamma=e\}$.
The weight lattice is defined as $P=\{\lambda \in \mathfrak{t}: \langle \gamma,\lambda\rangle\in 2\pi\mathbb{Z},\ \forall \gamma\in\Gamma\}$,
and the set of dominant weights is defined as $\Lambda =\{\lambda \in P, \langle\lambda,\alpha\rangle\geq 0,\ \forall \alpha\in A\}$.
Note that $\Lambda $ parametrizes the set of equivalence classes of unitary irreducible representations of $G$. 

For $\lambda \in \Lambda$, let $U_{\lambda }$ be the corresponding representation. By the Weyl character formula, we have
\begin{equation*}
\chi_{\lambda }(\exp \xi )=\text{tr}_{_{U_\lambda}}(\exp \xi)
=\frac{\sum_{w\in W}\varepsilon (w) e^{i\langle w(\lambda +\rho),\xi \rangle}}{D(\xi )},\ \xi\in\mathfrak t,
\end{equation*}
\begin{equation*}
d_{\lambda }=\dim U_\lambda=\prod\limits_{\alpha\in A}\frac{\langle\lambda + \rho, \alpha\rangle}{\langle \rho , \alpha\rangle}
\end{equation*}
where $W$ is the Weyl group which acts on $T$ and $\mathfrak t$, $\varepsilon(w) $ is the signature of $w\in W$, and
\begin{equation}\label{eqn:weyl}
D(\xi )=\sum_{w\in W} \varepsilon(w) e^{i\langle w(\rho) ,\xi \rangle}
=(2i)^{\frac{n-m}{2}}
\prod\limits_{\alpha \in A}\sin\frac{\langle\alpha ,\xi \rangle}{2}
\end{equation}
is the Weyl denominator. Note that $|D(\xi)|$ is $\Gamma$-periodic and $W$-invariant, therefore can also be defined on $G$ by letting $|D(x)|=|D(\exp\xi)|$ whenever $\exp\xi\in T$ is conjugate to $x\in G$.

Denote by $dx$ (resp. $dt$) the normalized Haar measure on $G$ (resp. $T$). For any central function $f$ on $G$, by the Weyl integration formula, we have
\begin{equation}\label{eqn:jacobian}
\int_G f(x)dx=\frac{1}{|W|}\int_T f(t)|D(t)|^2dt.
\end{equation}
Note that the central functions on $G$ can be naturally identified with $W$-invariant functions on $T$. Let
$$U_{\lambda}(x)=\big[a_{i,j}^{\lambda}(x)\big]_{1\le i,j\le d_\lambda}$$
be the matrix coefficients of $U_\lambda$. By the Peter-Weyl theorem,
$$\left\{a_{i,j}^{\lambda}(x): \lambda \in
\Lambda, 1\leq i,j\leq d_{\lambda}\right\}$$
forms a complete orthogonal system in $L^{2}(G)$. In particular, letting
$$b_{i,j}^{\lambda}(x)=\sqrt{d_{\lambda}}a_{i,j}^{\lambda}(x),$$
we get an orthonormal basis in $L^{2}(G)$:
$$\left\{b_{i,j}^{\lambda}(x): \lambda \in \Lambda, 0\leq i,j\leq
d_{\lambda}\right\}.$$

Given an $f\in L^1(G)$, we can consider the Fourier series expansion
\begin{equation}\label{eqn:b-expansion}
f(x)\sim \sum_{\lambda \in \Lambda }\sum_{0\leq i,j\leq d_{\lambda}}\left(
\int_{G}f(y)\overline{b_{i,j}^{\lambda}(y)}dy \right)
b_{i,j}^{\lambda}(x).
\end{equation}
Letting
$$\Phi_{\lambda}(x)=\sqrt{d_{\lambda}} U_{\lambda}(x),$$
we can rewrite \eqref{eqn:b-expansion} as
$$f(x)\sim\sum_{\lambda \in \Lambda }\text{tr}\left(C_{\lambda}\Phi_{\lambda}(x)^{\top}\right)$$
where
$$C_{\lambda}=\int_{G}f(y)\overline{\Phi _{\lambda}(y)}dy.$$
Using the representation property, we can further write
\begin{align*}
\sum_{\lambda \in \Lambda }\text{tr}\left(C_{\lambda}\Phi_{\lambda}(x)^{\top}\right)
&=\sum_{\lambda \in \Lambda } d_{\lambda} \text{tr}\left(\int_{G}f(y)\overline{U_{\lambda}(y)} U_{\lambda}(x)^{\top} dy \right)\\
&=\sum_{\lambda \in \Lambda } \int_{G}f(y) d_{\lambda} \text{tr}\left(\overline{U_{\lambda}(yx^{-1})}\right) dy\\
&=\sum_{\lambda \in \Lambda } \int_{G}f(y) d_{\lambda} \text{tr}\left({U_{\lambda}(xy^{-1})}\right) dy\\
&=\sum_{\lambda \in \Lambda } {d_{\lambda}\chi_{\lambda}}* f(x)
\end{align*}
where we denote
\begin{equation*}
\left( f* g\right)(x)=\int_{G}f(xy^{-1})g(y)dy.
\end{equation*}

Note that, writing
$$\widehat{f}(\lambda)=\frac{1}{\sqrt{d_{\lambda}}}C_{\lambda},$$
we have
\begin{equation*}
\widehat{f* g}(\lambda)=\widehat{f}(\lambda )\widehat{g}(\lambda).
\end{equation*}
Moreover, if $f$ is a central function, then
\begin{equation*}
\widehat{f}(\lambda)
=\left (\frac{1}{d_\lambda}\int_G f(y)\overline{\chi_\lambda(y)} dy\right ) I_{\lambda}
\end{equation*}
where $I_{\lambda}$ is the $d_\lambda\times d_\lambda$ identity matrix. From this one can deduce
\begin{equation}\label{eqn:chi-convolution}
d_{\lambda}\chi_\lambda* d_{\lambda'}\chi_{\lambda'}=
\begin{cases} 
0 & \text{if } \lambda\neq \lambda'\\
{d_\lambda}\chi_\lambda & \text{if } \lambda=\lambda'.
\end{cases}
\end{equation}

The Fourier series \eqref{eqn:b-expansion} has been studied by many authors (cf. \cite{BloomXu1994}, \cite{Cecchini1972}, \cite{ColzaniGiuliniTravaglini1989}, \cite{ColzaniGiuliniTravagliniEtAl1990}, \cite{CowlingManteroRicci1982}, \cite{Dreseler1981}, \cite{GiuliniSoardiTravaglini1982}, \cite{Meaney1978}, \cite{Ragozin1976}, \cite{Stanton1976}, \cite{StantonTomas1978}, \cite{Sugiura1971}, \cite{Taylor1968}, \cite{Xu1992} and references therein). Following \cite{Clerc1974}, we define the Bochner-Riesz means of order $\delta$ by
\begin{equation*}
S_{R}^{\delta}(f)(x)
=\sum_{\lambda \in \Lambda: \atop |\lambda+\rho|<R} \sum_{0\leq i,j\leq d_{\lambda}}
\left( 1-\frac{|\lambda +\rho|^{2}}{R^{2}}\right)^{\delta}
\left( \int_{G}f(y)\overline{b_{i,j}^{\lambda}(y)}dy \right)
b_{i,j}^{\lambda}(x).
\end{equation*}
Equivalently,
\begin{align*}
S_{R}^{\delta }(f)(x)
&=\sum_{\lambda \in \Lambda: \atop |\lambda+\rho|<R}
\left( 1-\frac{|\lambda +\rho|^{2}}{R^{2}}\right)^{\delta}
d_\lambda \text{tr}\left(\widehat f(\lambda)U_\lambda(x)^{\top}\right)\\
&=\sum_{\lambda \in \Lambda: \atop |\lambda+\rho|<R}
\left( 1-\frac{|\lambda +\rho|^{2}}{R^{2}}\right)^{\delta}
d_{\lambda}\chi_{\lambda}* f(x)\\
&=K_{R}^{\delta }* f(x)
\end{align*}
where
\begin{equation*}
K_{R}^{\delta }(x)
=\sum_{\lambda \in \Lambda: \atop |\lambda+\rho|<R}
\left( 1-\frac{|\lambda +\rho|^{2}}{R^{2}}\right)^{\delta }d_{\lambda}\chi_{\lambda}(x).
\end{equation*}
In what follows we will consider
$$\delta=\delta_0=\frac{n-1}{2}.$$

More generally, given a function $\varphi\in C[0,\infty)$ with compact support, let
$$K^\varphi_R(x)=\sum_{\lambda \in \Lambda}
\varphi\left(\frac{|\lambda+\rho|}{R}\right) d_\lambda \chi_\lambda(x).$$
To estimate $K^\varphi_R(x)$ we will use the Poisson summation formula. We will write
$$\widetilde J_{\nu}(r)=\frac{J_\nu(r)}{r^\nu}$$
where $J_\nu(r)$ is the Bessel function of order $\nu$.

\begin{lemma}[{\cite[Theorem~1]{Clerc1974}}]\label{lem:poisson}
Let
$$\phi(r)=2\pi\int_0^\infty \varphi(s) \widetilde J_{\frac{m-2}{2}}(rs) s^{m-1}ds.$$
Assume that there is $\varepsilon>0$ such that
$$\left|\left(\frac{1}{r}\frac{d}{dr}\right)^\ell \phi(r)\right|\preceq r^{-m-\ell-\varepsilon}$$
for all $\ell$ with $0\le\ell\le k=\frac{n-m}{2}$. Then
\begin{equation}\label{eqn:poisson}
K_R^\varphi(\exp\xi)
=\frac{{C}}{D(\xi)} \sum_{\gamma\in\Gamma}
\Big(\prod_{\alpha\in A} {\langle \alpha,\xi+\gamma \rangle}\Big)
R^n \left(\frac{1}{r}\frac{d}{dr}\right)^k\phi(R|\xi+\gamma|).
\end{equation}
\end{lemma}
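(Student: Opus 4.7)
The plan is to rewrite the sum defining $K_R^\varphi(\exp\xi)$ as a lattice sum on $\mathfrak{t}$ and then invoke the classical Poisson summation formula. Substituting the Weyl character formula together with the Weyl dimension formula $d_\lambda=\Pi(\lambda+\rho)/\Pi(\rho)$, where $\Pi(\mu):=\prod_{\alpha\in A}\langle\alpha,\mu\rangle$, reduces $K_R^\varphi(\exp\xi)$ to $D(\xi)^{-1}$ times a sum over $\lambda\in\Lambda$ of $\varphi(|\lambda+\rho|/R)\Pi(\lambda+\rho)\sum_w\varepsilon(w)e^{i\langle w(\lambda+\rho),\xi\rangle}$. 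Setting $\mu=\lambda+\rho$, the sum ranges over strictly dominant weights of $P$. Since $\Pi(\mu)$ and $\sum_w\varepsilon(w)e^{i\langle w\mu,\xi\rangle}$ are both $W$-alternating in $\mu$, their product is $W$-invariant; a standard symmetrization (change of variables $\mu\mapsto w^{-1}\mu$ in each Weyl term, together with the fact that $\Pi$ vanishes on chamber walls) extends the sum to all of $P$, giving
$$K_R^\varphi(\exp\xi)=\frac{C}{D(\xi)}\sum_{\mu\in P}\varphi\!\left(\tfrac{|\mu|}{R}\right)\Pi(\mu)\,e^{i\langle\mu,\xi\rangle}.$$

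Next, since the very definition of $P$ makes $\Gamma$ the dual lattice of $P$ under the pairing $e^{i\langle\cdot,\cdot\rangle}$, Poisson summation on $\mathfrak{t}\cong\mathbb{R}^m$ converts this into a sum over $\gamma\in\Gamma$ of the Euclidean Fourier transforms of $h(y):=\varphi(|y|/R)\Pi(y)e^{i\langle y,\xi\rangle}$. The exponential factor translates the Fourier variable, so $\widehat h(\gamma)$ reduces to the Fourier transform of $\Pi(\cdot)\varphi(|\cdot|/R)$ evaluated at $\gamma-\xi$. The crucial analytic input is Hecke's identity: for a homogeneous harmonic polynomial $\Pi$ of degree $k$ and a radial function $F(|y|)$ on $\mathbb{R}^m$,
$$\Pi(\nabla_y)F(|y|)=\Pi(y)\Big(\tfrac{1}{r}\tfrac{d}{dr}\Big)^kF(r)\Big|_{r=|y|}.$$
Combined with the Fourier duality $\widehat{y_jg}=i\partial_{\eta_j}\widehat g$, this yields, for radial $g$, $\widehat{\Pi g}(\eta)=i^k\Pi(\eta)(\tfrac1r\tfrac{d}{dr})^k\widehat g(r)|_{r=|\eta|}$. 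That our specific $\Pi$ is harmonic is the standard invariant-theoretic fact that $\Pi$ is the $W$-alternating polynomial of lowest degree $k=(n-m)/2$ and hence is annihilated by the $W$-invariant Laplacian. Applying the formula to $g(y)=\varphi(|y|/R)$, whose radial $m$-dimensional Fourier transform is a constant multiple of $R^m\phi(R|\cdot|)$ by the definition of $\phi$, and using the chain-rule identity that $(\tfrac{1}{r}\tfrac{d}{dr})^k$ applied to $\phi(Rr)$ equals $R^{2k}[(\tfrac{1}{s}\tfrac{d}{ds})^k\phi(s)]_{s=Rr}$ — so that the total $R$-power becomes $R^{m+2k}=R^n$ — and finally relabeling $\gamma\mapsto-\gamma$ in the sum to match the sign convention in the statement, delivers the claimed identity.

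The main obstacle is the rigorous justification of Poisson summation, which demands absolute convergence of the $\gamma$-series together with sufficient regularity of $h$. This is precisely what the hypothesis on $\phi$ supplies: combined with the bound $|\Pi(\xi+\gamma)|\preceq|\xi+\gamma|^k$, the top-order estimate $|(\tfrac{1}{r}\tfrac{d}{dr})^k\phi(r)|\preceq r^{-m-k-\varepsilon}$ makes each Poisson summand of size $|\xi+\gamma|^{-m-\varepsilon}$, yielding absolute convergence uniformly for $\xi$ in compact sets; the intermediate-order bounds $0\le\ell<k$ control lower-order radial derivatives of $\widehat h$ sufficiently to validate the Poisson identity itself. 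Once this convergence is established, each remaining step is a routine application of Euclidean Fourier analysis, invariant theory (harmonicity of $\Pi$), and the Weyl character/integration formalism.
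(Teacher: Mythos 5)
The paper offers no proof of this lemma --- it is imported verbatim as Theorem~1 of Clerc \cite{Clerc1974} --- and your argument is essentially Clerc's original one: symmetrize the Weyl-character sum over the full weight lattice $P$ using the alternating property of $\Pi(\mu)=\prod_{\alpha\in A}\langle\alpha,\mu\rangle$, apply Poisson summation ($\Gamma$ being the dual lattice of $P$), and evaluate the resulting Fourier transforms via the harmonicity of $\Pi$ and the Bochner--Hecke identity, with the power count $R^{m+2k}=R^n$ and the hypotheses on $(\tfrac1r\tfrac{d}{dr})^\ell\phi$ supplying absolute convergence. The outline is correct; the only imprecise clause is your remark that the bounds for $0\le\ell<k$ ``control lower-order radial derivatives of $\widehat h$'' --- their actual role is to let one first Poisson-sum the radial factor $\varphi(|\cdot|/R)$ alone (giving $c\sum_\gamma R^m\phi(R|\xi+\gamma|)$) and then apply $\Pi(\nabla_\xi)$ term by term, since each intermediate differentiated series is dominated by $\sum_\gamma|\xi+\gamma|^{\ell}\cdot|\xi+\gamma|^{-m-\ell-\varepsilon}$ and hence converges absolutely and locally uniformly.
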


Consider $\varphi_{(n)}(x)=\varphi(|x|)$ as a function on $\mathbb R^n$. Then
$$\widehat\varphi_{(n)}(\xi)=\int_{\mathbb R^n} e^{-i\xi\cdot x} \varphi(x)dx$$
is equal to (cf. \cite[p.~155]{SteinWeiss1971})
$$\widehat\varphi_{(n)}(|\xi|)=(2\pi)^{n/2} \int_0^\infty \varphi(s)\widetilde J_{\frac{n-2}{2}}(|\xi|s)s^{n-1}ds.$$
Combining this with the formula (cf. \cite[p.~45]{Watson1995})
$$\frac{d}{dr}\widetilde J_\nu(r)=-r\widetilde J_{\nu+1}(r),$$
we get
\begin{align}\label{eqn:bessel-derivative}
\left(\frac{1}{r}\frac{d}{dr}\right)^k\phi(r)
&=(-1)^k 2\pi\int_0^\infty \varphi(s) \widetilde J_{\frac{m+2k-2}{2}}(rs) s^{m+2k-1}ds\\
&=(-1)^k (2\pi)^{-\frac{n-2}{2}} \widehat\varphi_{(n)}(r).\notag 
\end{align}
This allows us to write \eqref{eqn:poisson} as
\begin{equation}\label{eqn:KGG}
K_R^\varphi(\exp\xi)
=\widetilde K_R^\varphi(\exp\xi)+G_R^\varphi(\exp\xi)
\end{equation}
where
\begin{align}\label{eqn:tilde-G}
\widetilde K_R^\varphi(\exp \xi)
=&{C} \mathds 1_{Q_0}(\xi)
\frac{\prod_{\alpha\in A} {\langle \alpha, \xi\rangle}}{D(\xi)}
R^n \widehat\varphi_{(n)}(R| \xi|),\notag\\
G_R^\varphi(\exp \xi)
=&{C} \mathds 1_{Q\backslash Q_0}(\xi)
\frac{\prod_{\alpha\in A} {\langle \alpha, \xi\rangle}}{D(\xi)}
R^n \widehat\varphi_{(n)}(R| \xi|)\\
&+\frac{{C}}{D( \xi)}
\sum_{\gamma\in\Gamma\backslash\{0\}}
\Big (\prod_{\alpha\in A} {\langle \alpha, \xi+\gamma \rangle}\Big)
R^n \widehat\varphi_{(n)}(R| \xi+\gamma|)\notag.
\end{align}
Here and in what follows, we assume that $\xi$ lies in a $W$-invariant fundamental domain $Q\subset\mathfrak t$, and let
$$Q_0=\left \{\xi\in Q: |\langle \alpha,\xi \rangle|< \pi,\ \forall \alpha\in A \right \}.
\footnote{Alternatively, one can take $Q$ to be any fundamental domain centered at $0$, and $Q_0$ a sufficiently small ball centered at $0$.}$$
Note that $\widetilde K_R^\varphi$ and $G_R^\varphi$ are both $W$-invariant. In particular, they can be extended to the whole group $G$.

\begin{lemma}\label{lem:est-G}
Suppose $\varphi$ is as in Lemma \ref{lem:poisson} and satisfies in addition
$$|\widehat\varphi_{(n)}(\xi)|\preceq |\xi|^{-n}.$$
Then
$$\big|G^\varphi_R(x)\big|\preceq \frac{1}{|D(x)|},\ x\in G.$$
\end{lemma}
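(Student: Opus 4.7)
The plan is to bound separately the two summands in the definition \eqref{eqn:tilde-G} of $G_R^\varphi(\exp\xi)$, showing each is $\preceq 1/|D(\xi)|$ uniformly in $\xi\in Q$ and $R>1$. The common strategy is that in both pieces the argument of $\widehat\varphi_{(n)}$ stays bounded away from zero, so the decay hypothesis $|\widehat\varphi_{(n)}(\eta)|\preceq|\eta|^{-n}$ applies and precisely cancels the prefactor $R^n$.

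For the boundary term (supported on $Q\setminus Q_0$), the defining constraint forces $|\langle\alpha,\xi\rangle|\geq\pi$ for some $\alpha\in A$, so $|\xi|\gtrsim 1$, while $\xi\in Q$ keeps $|\xi|$ bounded above. Then $R|\xi|\gtrsim R>1$, so $R^n|\widehat\varphi_{(n)}(R|\xi|)|\preceq|\xi|^{-n}\preceq 1$; combined with the bounded product $\prod_{\alpha\in A}|\langle\alpha,\xi\rangle|\preceq 1$, this term is $\preceq 1/|D(\xi)|$.

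For the lattice sum over $\gamma\in\Gamma\setminus\{0\}$, I would first record the uniform lower bound $|\xi+\gamma|\gtrsim 1+|\gamma|$ for $\xi\in Q$, which follows since $Q$ is bounded and $\Gamma$ is a discrete lattice. Applying the decay of $\widehat\varphi_{(n)}$ then yields $R^n|\widehat\varphi_{(n)}(R|\xi+\gamma|)|\preceq|\xi+\gamma|^{-n}$, and since $\mathrm{card}(A)=(n-m)/2$ with $|\langle\alpha,\eta\rangle|\preceq|\eta|$ for each root, one has $\prod_{\alpha\in A}|\langle\alpha,\xi+\gamma\rangle|\preceq|\xi+\gamma|^{(n-m)/2}$. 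Each summand is therefore $\preceq(1+|\gamma|)^{-(n+m)/2}$.

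The main (mild) obstacle is the convergence of the resulting series. Since $\Gamma$ has rank $m$, the sum $\sum_{\gamma\in\Gamma\setminus\{0\}}(1+|\gamma|)^{-(n+m)/2}$ converges iff $(n+m)/2>m$, i.e.\ iff $n>m$, which is guaranteed by the standing assumption that $G$ is noncommutative semisimple (forcing $\mathrm{card}(A)=(n-m)/2\geq 1$). Combining the two estimates yields $|G_R^\varphi(\exp\xi)|\preceq 1/|D(\xi)|$ on $Q$, and this bound extends to all of $G$ by the $W$-invariance of both sides noted after \eqref{eqn:tilde-G}.
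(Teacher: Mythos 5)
Your proposal is correct and follows essentially the same route as the paper's proof: the same splitting of $G_R^\varphi$ into the $Q\setminus Q_0$ piece and the lattice sum over $\gamma\neq 0$, the same use of the decay hypothesis to cancel $R^n$, the same product bound $\prod_{\alpha\in A}|\langle\alpha,\xi+\gamma\rangle|\preceq|\xi+\gamma|^{(n-m)/2}$, and the same convergence condition $n-\tfrac{n-m}{2}>m$ for the sum over the rank-$m$ lattice. No gaps.
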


\begin{proof}
As in \eqref{eqn:tilde-G}, we can write
$$G^\varphi_R(x)=I+II.$$	
To bound $|I|$, it suffices to show that, for all $\xi\in Q\backslash Q_0,$
$$\Big|\prod_{\alpha\in A} {\langle \alpha, \xi\rangle} \Big|
\big|R^n \widehat\varphi_{(n)}(R| \xi|)\big|
\preceq 1.$$
However, this follows immediately from 
$$\Big|\prod_{\alpha\in A} {\langle \alpha, \xi\rangle} \Big|
\preceq |\xi|^{\frac{n-m}{2}}\preceq 1$$
and, by the assumption, 
$$\big|R^n \widehat\varphi_{(n)}(R| \xi|)\big|
\preceq {|\xi|^{-n}}
\preceq 1$$
where the last inequality holds because $\xi\notin Q_0$.
	
To bound $|II|$, it suffices to show that, for all $\xi\in Q$,
$$\sum_{\gamma\in\Gamma\backslash\{0\}}
\Big|\prod_{\alpha\in A} {\langle \alpha, \xi+\gamma \rangle}\Big |
\Big|R^n \widehat\varphi_{(n)}(R| \xi+\gamma|)\Big|\preceq 1.$$
From $\xi\in Q$, $\gamma\in\Gamma\backslash\{0\}$, we get $|\xi+\gamma|\succeq |\gamma|$. On the other hand, we have
$$\Big|\prod_{\alpha\in A} {\langle \alpha, \xi+\gamma \rangle}\Big |\preceq |\xi+\gamma|^{\frac{n-m}{2}},$$
and, by the assumption,
$$\big|R^n \widehat\varphi_{(n)}(R| \xi+\gamma|)\big|\preceq |\xi+\gamma|^{-n}.$$
Since $n-{\frac{n-m}{2}}>m$, the desired bound follows by summing over $\gamma$.
\end{proof}

\begin{lemma}\label{lem:est-K-L1}
Suppose $\varphi$ is as in Lemma \ref{lem:poisson} and satisfies in addition
$$|\widehat\varphi_{(n)}(\xi)|\preceq |\xi|^{-n-\epsilon}$$
for some $\epsilon>0$. Then
$$\sup_{R>1}\big\|K_R^\varphi\big\|_{L^1(G)}<\infty.$$
\end{lemma}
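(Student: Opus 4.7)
The plan is to exploit the decomposition $K_R^\varphi=\widetilde K_R^\varphi+G_R^\varphi$ from \eqref{eqn:KGG} and bound each piece in $L^1(G)$ uniformly in $R$.

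For $G_R^\varphi$, Lemma \ref{lem:est-G} already gives $|G_R^\varphi(x)|\preceq 1/|D(x)|$, so it suffices to show $1/|D(\cdot)|\in L^1(G)$. Since $1/|D|$ is a central function, the Weyl integration formula \eqref{eqn:jacobian} reduces this to
$$\int_G \frac{1}{|D(x)|}\,dx=\frac{1}{|W|}\int_T |D(t)|\,dt,$$
and $|D|$ is bounded (by \eqref{eqn:weyl}) and hence integrable on $T$.

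For $\widetilde K_R^\varphi$, I would work directly from the explicit formula in \eqref{eqn:tilde-G}. Since $\widetilde K_R^\varphi$ is central and $W$-invariant, by the Weyl integration formula,
$$\bigl\|\widetilde K_R^\varphi\bigr\|_{L^1(G)}\preceq \int_{Q_0}\left|\frac{\prod_{\alpha\in A}\langle\alpha,\xi\rangle}{D(\xi)}\right| R^n\bigl|\widehat\varphi_{(n)}(R|\xi|)\bigr|\,|D(\xi)|^2\,d\xi.$$
The singular ratio $\prod_{\alpha}\langle\alpha,\xi\rangle/D(\xi)$ is uniformly bounded on $Q_0$ because, by \eqref{eqn:weyl}, each factor $\langle\alpha,\xi\rangle/\sin(\langle\alpha,\xi\rangle/2)$ is bounded on $\{|\langle\alpha,\xi\rangle|<\pi\}$. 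Combining with the crude bound $|D(\xi)|^2\preceq |\xi|^{n-m}$ (from $|\sin x|\le|x|$ in \eqref{eqn:weyl}), we arrive at
$$\bigl\|\widetilde K_R^\varphi\bigr\|_{L^1(G)}\preceq R^n\int_{Q_0} |\xi|^{n-m}\bigl|\widehat\varphi_{(n)}(R|\xi|)\bigr|\,d\xi.$$

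The final step is the change of variables $\eta=R\xi$ in $\mathfrak t\cong\mathbb R^m$, which turns the right-hand side into
$$\int_{RQ_0}|\eta|^{n-m}\bigl|\widehat\varphi_{(n)}(|\eta|)\bigr|\,d\eta,$$
independent of $R$. Using the hypothesis $|\widehat\varphi_{(n)}(\eta)|\preceq |\eta|^{-n-\epsilon}$ for large $|\eta|$ gives an integrand decaying like $|\eta|^{-m-\epsilon}$, while for small $|\eta|$ boundedness of $\widehat\varphi_{(n)}$ together with the factor $|\eta|^{n-m}$ yields integrability near the origin in $\mathbb R^m$. This produces a finite bound uniform in $R$ and completes the argument.

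The only place where the strengthened hypothesis $|\widehat\varphi_{(n)}(\eta)|\preceq|\eta|^{-n-\epsilon}$ (as opposed to the weaker $|\eta|^{-n}$ used in Lemma \ref{lem:est-G}) is actually needed is precisely the tail integrability at infinity in the last step; without the extra $\epsilon$ one would only get a logarithmic divergence. This is the technical heart of the lemma, though it is a routine estimate once the correct change of variables has been set up.
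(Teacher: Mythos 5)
Your proof is correct, and it is essentially the paper's argument in a slightly different packaging. The paper does not invoke the decomposition $K_R^\varphi=\widetilde K_R^\varphi+G_R^\varphi$ here; instead it applies the Weyl integration formula to $K_R^\varphi$ directly, cancels one power of $D(\xi)$ against the Jacobian $|D(\xi)|^2$, and then unfolds the whole lattice sum $\sum_{\gamma\in\Gamma}\int_Q$ into a single integral over $\mathbb R^m$ using the $\Gamma$-periodicity of $|D|$, before rescaling by $R$ and using $|\widehat\varphi_{(n)}(\xi)|\preceq(1+|\xi|)^{-n-\epsilon}$. You instead dispose of the $\gamma\neq0$ and $\xi\in Q\backslash Q_0$ contributions by quoting Lemma \ref{lem:est-G} together with $\int_G|D(x)|^{-1}dx=\frac{1}{|W|}\int_T|D(t)|\,dt<\infty$, and run the rescaling argument only on the main term $\widetilde K_R^\varphi$. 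The two routes rest on the same two ingredients (boundedness of $\prod_{\alpha\in A}\langle\alpha,\xi\rangle/D(\xi)$ on $Q_0$, resp.\ $|D(\xi)|\preceq|\xi|^{(n-m)/2}$, and the homogeneity count $R^{n-(n-m)-m}=1$ under $\eta=R\xi$), so neither is more general; your version has the small expository advantage of isolating exactly where the extra $\epsilon$ of decay is needed, since $G_R^\varphi$ is already controlled under the weaker $|\xi|^{-n}$ hypothesis, while the paper's version avoids any appeal to the earlier lemma and handles all lattice translates in one stroke.
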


\begin{proof}
By \eqref{eqn:jacobian}, we have
$$\int_G |K_R^\varphi(x)|dx
=\frac{1}{|W|}\int_T |K_R^\varphi(t)||D(t)|^2dt.$$
By \eqref{eqn:poisson}, this can be bounded by
$$\int_Q \sum_{\gamma\in\Gamma}
\Big|\Big (\prod_{\alpha\in A} {\langle \alpha, \xi+\gamma \rangle}\Big)
R^n \widehat\varphi_{(n)}(R| \xi+\gamma|)\Big| |D(\xi)|d\xi.$$
Using $|D(\xi)|=|D(\xi+\gamma)|$ and $|D(\xi)|\preceq |\xi|^{\frac{n-m}{2}}$, we can bound this by
\begin{align*}
&\int_{\mathbb R^m}
\Big|\Big (\prod_{\alpha\in A} {\langle \alpha, \xi \rangle}\Big)
R^n \widehat\varphi_{(n)}(R|\xi|)\Big| |\xi|^{\frac{n-m}{2}} d\xi\\
=&\int_{\mathbb R^m}
\Big|\Big (\prod_{\alpha\in A} {\langle \alpha, \xi \rangle}\Big)
\widehat\varphi_{(n)}(|\xi|)\Big| |\xi|^{\frac{n-m}{2}} d\xi\\
\preceq&\int_{\mathbb R^m}|\xi|^{n-m} (1+|\xi|)^{-n-\epsilon} d\xi\\
<&\infty.
\end{align*}
This completes the proof.
\end{proof}

Let $\nu$ be a finite Borel measure on $G$. The Hardy-Littlewood maximal function $M(\nu)$ is defined by
$$M(\nu)(x)=\sup_{r>0}\frac{|\nu|(B(x,r))}{|B(x,r)|}$$
where
$$B(x,r)=\left\{y\in G: d(y,x)<r \right\}.$$
Note that we have
\begin{equation}\label{eqn:ahlfors}
r^n \preceq |B(x,r)|\preceq r^n,\ x\in G, 0<r<1.
\end{equation}
Note also that  $M(\nu)(x)\ge \|\nu\|,\ x\in G$. 

\begin{lemma}\label{lem:hardy-littlewood}
For any finite Borel measure $\nu$ on $G$, we have
$$\big|\{x\in G: M(\nu)(x)>t\}\big|
\preceq \frac{\|\nu\|}{t},\ t>0.$$
In particular,
$$M(\nu)(x)<\infty,\ \text{a.e.}\ x\in G.$$
\end{lemma}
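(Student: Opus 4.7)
The plan is to prove this standard weak-type $(1,1)$ maximal inequality for measures by the usual Vitali covering argument, with the Ahlfors regularity \eqref{eqn:ahlfors} of Haar measure (coming from the fact that $d$ is a bi-invariant Riemannian metric on the compact group $G$) serving as the single substantive input.

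First I would set $E_t=\{x\in G:M(\nu)(x)>t\}$ and reduce to the regime $t\gg\|\nu\|$: when $t\leq C\|\nu\|$ the inequality is trivial since $|E_t|\leq|G|\preceq 1\preceq\|\nu\|/t$. For each $x\in E_t$ choose $r_x>0$ with $|\nu|(B(x,r_x))>t\,|B(x,r_x)|$; combined with $|\nu|(B(x,r_x))\leq\|\nu\|$ and \eqref{eqn:ahlfors}, this forces $r_x^n\preceq \|\nu\|/t$, so in particular $r_x$ is uniformly bounded by a small absolute constant.

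Next I would apply the standard Vitali covering lemma to $\{B(x,r_x):x\in E_t\}$ to extract a countable disjoint subcollection $\{B(x_i,r_i)\}$ with $E_t\subset\bigcup_i B(x_i,5r_i)$. The Ahlfors bound \eqref{eqn:ahlfors}, valid for all $r_i$ in the range established above, gives $|B(x_i,5r_i)|\preceq|B(x_i,r_i)|$, and then
$$|E_t|\leq\sum_i|B(x_i,5r_i)|\preceq\sum_i|B(x_i,r_i)|\leq\frac{1}{t}\sum_i|\nu|(B(x_i,r_i))\leq\frac{\|\nu\|}{t},$$
the last inequality using disjointness. The almost-everywhere finiteness of $M(\nu)$ is then immediate: $\{M(\nu)=\infty\}\subset E_t$ for every $t$, so its measure is bounded by $\|\nu\|/t\to 0$.

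The only point one might worry about is that the Vitali lemma needs the radii to be bounded; this is automatic here because $G$ is compact (or, more sharply, by the estimate on $r_x$ above). Everything else is a verbatim translation of the Euclidean argument, since $(G,d)$ is a compact metric space with doubling Haar measure. I do not expect any serious obstacle.
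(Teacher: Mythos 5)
Your argument is correct and is precisely the standard Vitali-covering proof of the weak-type $(1,1)$ inequality on a doubling metric measure space, which is exactly what the paper invokes by citing \cite[Theorem~2.2]{Heinonen2001} rather than writing out the details. No gaps: the radius bound needed for the Vitali lemma and the doubling input from \eqref{eqn:ahlfors} are both handled correctly.
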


\begin{proof}
The proof is standard, cf. \cite[Theorem~2.2]{Heinonen2001}.
\end{proof}

\begin{lemma}\label{lem:est-K-maximal}
Suppose $\varphi$ is as in Lemma \ref{lem:poisson} and satisfies in addition
$$|\widehat\varphi_{(n)}(\xi)|\preceq |\xi|^{-n-\epsilon}$$
for some $\epsilon>0$. Then for any finite Borel measure $\nu$ on $G$,
$$\sup_{R>1} \big|\widetilde K_R^\varphi\big|*|\nu|(x)
\preceq M(\nu)(x),\ x\in G.$$
\end{lemma}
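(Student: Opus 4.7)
The plan is to establish the pointwise dominating bound
$$|\widetilde K_R^\varphi(x)| \preceq R^n (1 + R\, d(x, e))^{-n-\epsilon}, \qquad x \in G,$$
and then to dominate the resulting convolution by $M(\nu)(x)$ via a dyadic annular decomposition centered at $x$, in the spirit of the standard argument bounding convolution with an approximate identity by the Hardy--Littlewood maximal function.

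For the kernel bound, I would start from the explicit formula \eqref{eqn:tilde-G}. Since $\varphi \in C[0,\infty)$ has compact support, $\widehat\varphi_{(n)}$ is continuous and bounded; combining this with the decay hypothesis yields $|\widehat\varphi_{(n)}(\eta)| \preceq (1+|\eta|)^{-n-\epsilon}$ on $\mathbb R^n$. Next, for $\xi \in Q_0$ each $|\langle\alpha,\xi\rangle| < \pi$, and the elementary inequality $|\sin(y/2)| \geq |y|/\pi$ on $[-\pi,\pi]$ together with the Weyl denominator formula \eqref{eqn:weyl} gives $|\prod_{\alpha \in A}\langle\alpha,\xi\rangle / D(\xi)| \preceq 1$ uniformly over $\xi \in Q_0$. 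Substituting both estimates into \eqref{eqn:tilde-G} produces
$$|\widetilde K_R^\varphi(\exp\xi)| \preceq R^n (1+R|\xi|)^{-n-\epsilon} \mathds 1_{Q_0}(\xi).$$
Taking $Q_0$ to be a small ball centered at $0$ (as indicated in the footnote) makes $\exp$ a diffeomorphism on $Q_0$, so $d(\exp\xi, e) = |\xi|$ for $\xi \in Q_0$. Because $\widetilde K_R^\varphi$ is central and $d$ is bi-invariant, the bound transfers to any $x \in G$, interpreted trivially outside the support.

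For the convolution, bi-invariance of $d$ yields $d(xy^{-1}, e) = d(x, y)$, hence
$$|\widetilde K_R^\varphi| * |\nu|(x) \preceq \int_G R^n (1 + R\, d(x,y))^{-n-\epsilon} d|\nu|(y).$$
I would split this integral over $B(x, 1/R)$ and the dyadic annuli $B(x, 2^k/R) \setminus B(x, 2^{k-1}/R)$ for $k \geq 1$. On the $k$-th piece the integrand is at most $C R^n \cdot 2^{-k(n+\epsilon)}$, while by \eqref{eqn:ahlfors} and the definition of the maximal function
$$|\nu|(B(x, 2^k/R)) \leq M(\nu)(x) \, |B(x, 2^k/R)| \preceq M(\nu)(x)\, (2^k/R)^n,$$
valid for $2^k/R < 1$. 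The latter restriction is harmless: since $\widetilde K_R^\varphi$ is supported in a fixed neighborhood of $e$ of diameter bounded by a constant (take $Q_0$ of diameter less than $1$), only annuli with $k \preceq \log R$ contribute. Multiplying the pointwise bound by this measure estimate, the $k$-th contribution is $\preceq 2^{-k\epsilon} M(\nu)(x)$, and summing the geometric series gives the claim.

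The main obstacle is the kernel estimate itself, specifically reconciling the Euclidean scale $|\xi|$ on $\mathfrak t$ with the Riemannian distance $d$ on $G$; once one works in a small enough neighborhood (shrinking $Q_0$) and invokes bi-invariance, this identification is clean. The subsequent dyadic maximal function argument is then routine given the Ahlfors regularity \eqref{eqn:ahlfors}.
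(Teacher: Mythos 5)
Your proposal is correct and follows essentially the same route as the paper: bound the ratio $\prod_{\alpha\in A}\langle\alpha,\xi\rangle/D(\xi)$ on $Q_0$ to reduce to $R^n|\widehat\varphi_{(n)}(R|\xi|)|$, then run the standard dyadic decomposition at scales $2^j/R$ together with \eqref{eqn:ahlfors} to dominate the convolution by $M(\nu)(x)$. The only cosmetic difference is that you package the kernel estimate as $R^n(1+Rd(x,e))^{-n-\epsilon}$ and integrate over annuli, whereas the paper sums indicators of balls, and it handles the region $|\xi|\succeq 1$ with the crude bound $\|\nu\|\le M(\nu)(x)$ rather than via the compact support of the kernel; these are equivalent.
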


\begin{proof}
Note that by \eqref{eqn:weyl}, we have, for all $\xi\in Q_0$,
$$\left|\frac{\prod_{\alpha\in A} {\langle \alpha, \xi\rangle}}{D( \xi)}\right|\preceq 1.$$	
Therefore it always holds that
$$\big |\widetilde K^\varphi_R(\exp\xi)\big| \preceq R^n |\widehat\varphi_{(n)}(R| \xi|)|.$$

Given $R>1$, let $j_0\in\mathbb N$ be the number with
$${2^{j_0}}<R\le {2^{j_0+1}}.$$ 
For $|\xi|< 1/R$, we estimate 
$$R^n |\widehat\varphi_{(n)}(R| \xi|)|
\preceq R^n.$$
For $2^{j-1}/R\le |\xi|< 2^j/R,\ j=1,\cdots,j_0$, by the assumption,
$$R^n |\widehat\varphi_{(n)}(R|\xi|)|
\preceq \frac{R^n}{(R|\xi|)^{n+\epsilon}}
\preceq \frac{1}{2^{j\epsilon}} \left (\frac{R}{2^{j}}\right )^n.$$
For $|\xi|\ge 2^{j_0}/R$, we have
$$R^n |\widehat\varphi_{(n)}(R|\xi|)|
\preceq \frac{R^n}{(R|\xi|)^{n}}
\preceq \left (\frac{R}{2^{j_0}}\right )^n
\preceq 1.$$
Combining these we get
\begin{align*}
R^n |\widehat\varphi_{(n)}(R|\xi|)|
\preceq& 1 + {R^n}\mathds 1_{\{|\xi|< 1/R\}}\\
&+\sum_{j=1}^{j_0} \frac{1}{2^{j\epsilon}} \left (\frac{R}{2^{j}}\right )^n
\mathds 1_{\{2^{j-1}/R\le |\xi|< 2^j/R\}},
\end{align*}
which in turn implies, for $x\in G$,
\begin{align*}
\big |\widetilde K^\varphi_R(x)\big |
\preceq 1+\sum_{j=0}^{j_0} \frac{1}{2^{j\epsilon}} \left (\frac{R}{2^{j}}\right )^n
\mathds 1_{\{d(x,e)< 2^j/R\}}.
\end{align*}
From this we obtain, using \eqref{eqn:ahlfors},
\begin{align*}
\big |\widetilde K^\varphi_R\big |*|\nu|(x)
\preceq \|\nu\| + \sum_{j=0}^{j_0}  \frac{1}{2^{j\epsilon}} M(\nu)(x)
\preceq M(\nu)(x),
\end{align*}
as desired.
\end{proof}

When $\varphi(r)=(1-r^2)^{\delta_0}_+$, we have
$$\phi(r)={C} \widetilde J_{m/2+\delta_0}(r).$$
In particular, the conditions in Lemma \ref{lem:poisson} are satisfied, and
\begin{align*}
\widehat\varphi_{(n)}(r)
={C} \widetilde J_{n-1/2}(r),\ 
|\widehat\varphi_{(n)}(\xi)|
\preceq |\xi|^{-n}.
\end{align*}
Combining with Lemma \ref{lem:est-G} we obtain the following. 

\begin{lemma}[{\cite[Theorem~2]{Clerc1974}}]\label{lem:poisson-delta}
The kernel $K_R^{\delta_0}$ satisfies
$$K_R^{\delta_0}(x)
=\widetilde K_R^{\delta_0}(x)+O\left(\frac{1}{|D(x)|}\right),\ x\in G$$
where $\widetilde K_R^{\delta_0}$ is a central function satisfying
$$\widetilde K_R^{\delta_0}(\exp \xi)
={C} \mathds 1_{Q_0}(\xi)
\frac{\prod_{\alpha\in A} {\langle \alpha, \xi\rangle}}{D( \xi)}
R^n \widetilde J_{n-1/2}(R|\xi|)
,\ \xi\in Q.$$
\end{lemma}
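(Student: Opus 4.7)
The plan is to obtain Lemma \ref{lem:poisson-delta} as a direct combination of Lemma \ref{lem:poisson}, the decomposition \eqref{eqn:KGG}, and Lemma \ref{lem:est-G}, once the Bessel identities for the specific choice $\varphi(r)=(1-r^2)_+^{\delta_0}$ are verified. First, I would compute $\phi$ and $\widehat\varphi_{(n)}$ explicitly. The Sonine integral
$$\int_0^1 (1-s^2)^{\delta}s^{\nu+1}J_\nu(rs)\,ds=2^{\delta}\Gamma(\delta+1)\,r^{-\delta-1}J_{\nu+\delta+1}(r),$$
applied with $\delta=\delta_0$ and $\nu=(m-2)/2$, gives $\phi(r)=C\,\widetilde J_{m/2+\delta_0}(r)$. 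The same formula with $\nu=(n-2)/2$, combined with the radial Fourier transform identity already recorded in the excerpt, produces
$$\widehat\varphi_{(n)}(r)=C\,\widetilde J_{n/2+\delta_0}(r)=C\,\widetilde J_{n-1/2}(r),$$
since $n/2+\delta_0=n-1/2$. The standard Bessel asymptotic $|J_\nu(r)|\leq Cr^{-1/2}$ for $r\geq 1$ then yields $|\widehat\varphi_{(n)}(\xi)|\preceq(1+|\xi|)^{-n}$.

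Next I would verify the hypothesis of Lemma \ref{lem:poisson}. Iterating the recurrence $(\frac{1}{r}\frac{d}{dr})\widetilde J_\nu(r)=-\widetilde J_{\nu+1}(r)$ (recorded just above \eqref{eqn:bessel-derivative}) gives $(\frac{1}{r}\frac{d}{dr})^\ell\phi(r)=(-1)^\ell C\,\widetilde J_{m/2+\delta_0+\ell}(r)$, which for $r\geq 1$ is bounded by $Cr^{-m/2-\delta_0-\ell-1/2}$. Since $G$ is noncommutative semisimple, one has $n>m$, hence $m/2+\delta_0+1/2=(m+n)/2>m$, so one may pick $\varepsilon>0$ with $m/2+\delta_0+\ell+1/2\geq m+\ell+\varepsilon$ for every $0\leq\ell\leq k=(n-m)/2$; smoothness of $\widetilde J$ at the origin trivially handles small $r$. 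The hypothesis of Lemma \ref{lem:poisson} is therefore met.

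With these inputs in place, the identity \eqref{eqn:KGG} decomposes $K_R^{\delta_0}$ as $\widetilde K_R^{\delta_0}+G_R^{\delta_0}$. The closed form claimed for $\widetilde K_R^{\delta_0}$ is read off directly from \eqref{eqn:tilde-G} after substituting $\widehat\varphi_{(n)}(r)=C\,\widetilde J_{n-1/2}(r)$, while the remainder estimate $|G_R^{\delta_0}(x)|\preceq 1/|D(x)|$ is exactly the conclusion of Lemma \ref{lem:est-G}, whose sole hypothesis $|\widehat\varphi_{(n)}(\xi)|\preceq|\xi|^{-n}$ was established in the first paragraph (the bound on small $|\xi|$ being irrelevant, as the proof of Lemma \ref{lem:est-G} only uses it on the regions $\xi\notin Q_0$ and $|\xi+\gamma|\succeq|\gamma|$ where $|\xi|$ is bounded away from zero). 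The only genuinely delicate point in the whole argument is the decay check in the previous paragraph, where one must exploit the strict inequality $n>m$ coming from noncommutativity; everything else is a bookkeeping application of the earlier lemmas.
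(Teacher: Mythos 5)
Your proposal is correct and follows essentially the same route as the paper, which obtains the lemma by computing $\phi(r)=C\widetilde J_{m/2+\delta_0}(r)$ and $\widehat\varphi_{(n)}(r)=C\widetilde J_{n-1/2}(r)$, checking the hypotheses of Lemma \ref{lem:poisson}, and invoking the decomposition \eqref{eqn:KGG} together with Lemma \ref{lem:est-G}. You supply the details (the Sonine integral and the verification that $n>m$ yields the required $\varepsilon$-decay) that the paper leaves implicit, and these are all accurate.
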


\section{Proof of Theorem \ref{thm:kolmogorov-L1}}\label{sec:transference}
The proof of Theorem \ref{thm:kolmogorov-L1} relies on the following lemma, whose proof will be given in Section \ref{sec:random-points}.

\begin{lemma}\label{lem:kolmogorov-M}
Given $L>1$ and $\varepsilon>0$, there exists a Borel probability measure $\mu$ on $G$ such that
$$\limsup_{R\rightarrow\infty} 
\big|\widetilde K_R^{\delta_0}*\mu(x)\big|>L$$
holds on a set $E\subset G$ with $|G\backslash E|<\varepsilon$.
\end{lemma}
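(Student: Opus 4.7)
The plan is, following Kahane's probabilistic approach to Kolmogorov's theorem, to take $\mu$ to be a typical realization of the empirical measure $\mu_N = \frac{1}{N}\sum_{i=1}^N \delta_{X_i}$, where $X_1,\ldots,X_N$ are i.i.d.\ Haar-uniform random points on $G$ and $N = N(L,\varepsilon)$ is chosen large.

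The first step is to isolate the oscillatory structure of $\widetilde K_R^{\delta_0}$. Inserting the classical Bessel asymptotic
$$\widetilde J_{n-1/2}(r) = \sqrt{2/\pi}\, r^{-n}\cos(r - n\pi/2) + O(r^{-n-1}), \qquad r\to\infty,$$
into Lemma \ref{lem:poisson-delta} gives, for $\xi \in Q_0$ with $R|\xi| \geq 1$,
$$\widetilde K_R^{\delta_0}(\exp\xi) = H(\xi)\cos\bigl(R|\xi|+\Theta(\xi)\bigr) + O\bigl(R^{-1}|\xi|^{-n-1}\bigr),$$
where $H(\xi) = C\bigl|\prod_{\alpha\in A}\langle\alpha,\xi\rangle/D(\xi)\bigr|\,|\xi|^{-n}$ and $\Theta(\xi)$ is a bounded phase (the factor $\bigl(\prod_\alpha\langle\alpha,\xi\rangle\bigr)/D(\xi)$ has locally constant sign on $Q_0$). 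Extended by $0$ off $Q_0$ and transferred to $G$ by centrality, $H$ is a nonnegative central function with $H(y) \asymp d(y,e)^{-n}$ near $e$, and the Weyl integration formula gives $\int_G H(y)\,dy = +\infty$, since the integrand $H(\xi)|D(\xi)|^2$ is $\asymp |\xi|^{-m}$ near the origin in $\mathfrak t$.

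For a.e.\ pair $(X_1,\ldots,X_N,x)$ --- exceptional sets being countable unions of proper real-analytic subvarieties --- the lengths $\ell_i(x) := |\xi_i(x)|$, where $\xi_i(x) \in Q_0$ is the chamber representative of $xX_i^{-1}$, are $\mathbb Q$-linearly independent. The Kronecker--Weyl theorem then yields $R_k = R_k(x) \to \infty$ along which $\cos(R_k\ell_i(x)+\Theta(\xi_i(x))) \to 1$ simultaneously for all $i$, and combining with the expansion above one obtains
$$\limsup_{R\to\infty}\,\widetilde K_R^{\delta_0} * \mu_N(x) \geq \frac{1}{N}\sum_{i=1}^N H(\xi_i(x)).$$

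To ensure this lower bound exceeds $L$ uniformly in $x$, I truncate: choose $M$ such that $H_M := \min(H,M)$ satisfies $\int_G H_M\,dy \geq 4L$, which is possible since $\int H = \infty$. A uniform Glivenko--Cantelli theorem applied to the translate class $\{y\mapsto H_M(xy^{-1}) : x\in G\}$, which is a Glivenko--Cantelli class on the compact metric group $G$ (after smoothing $H_M$ to a continuous function if necessary), yields, with probability at least $1-\varepsilon$ over the $X_i$'s and for $N$ sufficiently large,
$$\sup_{x\in G}\Bigl|\frac{1}{N}\sum_{i=1}^N H_M(xX_i^{-1}) - \int_G H_M\,dy\Bigr| < L.$$
For any such realization $\mu := \mu_N$, one has $\frac{1}{N}\sum_i H(\xi_i(x)) \geq \frac{1}{N}\sum_i H_M(xX_i^{-1}) \geq 3L$ for every $x\in G$, whence $\limsup_R|\widetilde K_R^{\delta_0} * \mu(x)| \geq 3L > L$ for a.e.\ $x$. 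I expect the main obstacle to be the phase-alignment step: verifying the $\mathbb Q$-linear independence of the length tuple for a.e.\ $x$ requires a genericity argument for distance functions on $G$, and the Glivenko--Cantelli application must be made uniform in the base-point, which may need preliminary smoothing of the truncation $H_M$.
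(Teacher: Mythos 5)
Your proposal is correct and follows essentially the same route as the paper: a typical empirical measure of i.i.d.\ Haar-uniform points, the Bessel asymptotic for $\widetilde K_R^{\delta_0}$, $\mathbb Q$-linear independence of the radii $|\xi_j(x)|$ plus Kronecker's theorem to align the phases, and a Glivenko--Cantelli argument applied to a continuous truncation of the non-integrable central kernel $\asymp d(y,e)^{-n}$. The only cosmetic differences are that the paper truncates at spatial scale $1/R_0$ (so its lower bound is $\log R_0$ rather than your $\int_G H_M\,dy \ge 4L$) and obtains the empirical lower bound only on a set of measure $>1-\varepsilon$ via a limiting argument, rather than through your uniform-in-$x$ Glivenko--Cantelli claim.
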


To prove Theorem \ref{thm:kolmogorov-L1}, we introduce two more functions. Let $v(r)$ be a smooth function on $\mathbb R$ satisfying
\begin{alignat*}{2}
v(r)=1,&&\ \text{if }|r|\le 1,\\
v(r)=0,&&\ \text{if }|r|\ge 2.
\end{alignat*}
Let
$$V_R(x)=K^v_R(x)=\sum_{\lambda\in\Lambda} v\left (\frac{|\lambda+\rho|}{R} \right ) d_\lambda\chi_\lambda(x).$$
By the smoothness of $v$, the conditions in Lemma \ref{lem:est-K-L1} are satisfied. So we have
\begin{lemma}\label{lem:V-L1-bound}
$$\sup_{R>1}\big\|V_R\big\|_{L^1(G)}<\infty.$$
\end{lemma}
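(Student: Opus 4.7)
The plan is to apply Lemma \ref{lem:est-K-L1} with $\varphi = v$, so the task reduces to verifying two things: that $v$ satisfies the hypotheses of Lemma \ref{lem:poisson}, and that $|\widehat v_{(n)}(\xi)| \preceq |\xi|^{-n-\epsilon}$ for some $\epsilon > 0$. Both will follow from the observation that the radial function $v(|x|)$ on $\mathbb{R}^n$ is smooth and compactly supported, hence Schwartz, so its Fourier transform decays faster than any polynomial.

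First I would justify that $v(|x|) \in C_c^\infty(\mathbb R^n)$. Smoothness away from the origin is automatic since $v$ is smooth on $(0,\infty)$ and $|x|$ is smooth there. At the origin, smoothness follows because $v$ is \emph{constant} on $[-1,1]$, so all odd derivatives of $v$ vanish at $0$ and the composition with $|x|$ is smooth across $x=0$. Compact support is immediate from $v$ having compact support. Consequently $\widehat v_{(n)}$ is Schwartz, and in particular $|\widehat v_{(n)}(\xi)| \preceq (1+|\xi|)^{-N}$ for every $N$, which yields the extra hypothesis of Lemma \ref{lem:est-K-L1} with any $\epsilon > 0$.

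Next I would check the hypothesis of Lemma \ref{lem:poisson}. A short computation analogous to \eqref{eqn:bessel-derivative} identifies $\left(\tfrac{1}{r}\tfrac{d}{dr}\right)^\ell \phi(r)$ with a constant multiple of the radial Fourier transform on $\mathbb{R}^{m+2\ell}$ of the radial function $v(|y|)$; namely,
$$\left(\frac{1}{r}\frac{d}{dr}\right)^\ell \phi(r) = (-1)^\ell\, 2\pi \int_0^\infty v(s)\, \widetilde J_{\frac{m+2\ell-2}{2}}(rs)\, s^{m+2\ell-1}\, ds,$$
which equals $(-1)^\ell (2\pi)^{-(m+2\ell-2)/2}$ times $\mathcal F_{\mathbb R^{m+2\ell}}[v(|\cdot|)](r)$. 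Since $v(|\cdot|)$ is smooth and compactly supported on $\mathbb R^{m+2\ell}$ as well (by exactly the same argument as above, applied in dimension $m+2\ell$), each such Fourier transform is Schwartz and hence decays faster than any polynomial. This comfortably beats the required bound $r^{-m-\ell-\varepsilon}$ for $0\le\ell\le k$.

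With both hypotheses verified, Lemma \ref{lem:est-K-L1} applied to $\varphi = v$ gives $\sup_{R>1}\|V_R\|_{L^1(G)} < \infty$. There is no real obstacle here; the only mild point worth noting is the smoothness of $v(|x|)$ at the origin, which is ensured by having chosen $v \equiv 1$ on a neighborhood of $0$.
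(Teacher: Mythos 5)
Your proof is correct and follows the same route as the paper: the paper's own proof is the one-line assertion that "by the smoothness of $v$, the conditions in Lemma \ref{lem:est-K-L1} are satisfied," and your argument simply supplies the details of that verification (smoothness of $v(|\cdot|)$ at the origin via $v\equiv 1$ near $0$, and the identification of $(\tfrac1r\tfrac{d}{dr})^\ell\phi$ with a radial Fourier transform in dimension $m+2\ell$, exactly as in \eqref{eqn:bessel-derivative}). No gaps.
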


Let $\bar\varphi$ be a function on $\mathbb R$ which is smooth in $(-1,1)$ and which satisfies
\begin{alignat*}{2}
&\bar{\varphi}(r)=1,&&\ \text{if }|r|\le 1/3,\\
&\bar{\varphi}(r)=(1-|r|^2)_+^{\delta_0},&&\ \text{if }|r|\ge 2/3.
\end{alignat*}
Let
$$\bar K^{\delta_0}_R(x)
=K^{\bar\varphi}_R(x)
=\sum_{\lambda \in \Lambda}
\bar\varphi\left(\frac{|\lambda+\rho|}{R}\right) d_\lambda \chi_\lambda(x).$$
Since $\bar{\varphi}(r)$ and $(1-|r|^2)_+^{\delta_0}$ differ by a compactly supported smooth function, by Lemma \ref{lem:poisson}, \ref{lem:est-G}, \ref{lem:est-K-maximal} and \ref{lem:poisson-delta}, for any finite Borel measure $\nu$, we have
\begin{align*}
\big |(K^{\delta_0}_R-\bar K^{\delta_0}_R)*\nu(x)\big|
&\preceq M(\nu)(x)+\frac{1}{|D(\cdot)|}*|\nu|(x),\\
\big |\bar K^{\delta_0}_R-\widetilde K^{\delta_0}_R\big |*|\nu|(x)
&\preceq M(\nu)(x)+\frac{1}{|D(\cdot)|}*|\nu|(x).
\end{align*}
On the other hand, by \eqref{eqn:jacobian} and \eqref{eqn:weyl} we have 
$$\int_G \frac{1}{|D(x)|}dx=\frac{1}{|W|}\int_T {|D(t)|}dt<\infty.$$
Therefore, by Young's inequality,
$$\frac{1}{|D(\cdot)|}*|\nu|(x)<\infty,\ \text{a.e.}\ x\in G.$$
Combining with Lemma \ref{lem:hardy-littlewood}, we get

\begin{lemma}\label{lem:mollifiers}
(i) For any $f\in L^1(G)$,
$$\sup_{R>1} \left | K^{\delta_0}_R*f(x)-\bar K^{\delta_0}_R*f(x) \right |
<\infty,\ \text{a.e.}\ x\in G.$$
(ii) For any finite Borel measure $\nu$,
$$\sup_{R>1} \big|\bar K^{\delta_0}_R-\widetilde K^{\delta_0}_R\big |*|\nu|(x)
<\infty,\ \text{a.e.}\ x\in G.$$
\end{lemma}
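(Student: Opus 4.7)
The plan is to assemble, for a general finite Borel measure $\nu$ on $G$ (taking $d\nu=f\,dx$ in part~(i)), the pointwise bound
\[
\sup_{R>1}\big|(K_R^{\delta_0}-\bar K_R^{\delta_0})*\nu\big|(x)
+\sup_{R>1}\big|\bar K_R^{\delta_0}-\widetilde K_R^{\delta_0}\big|*|\nu|(x)
\preceq M(\nu)(x)+\frac{1}{|D(\cdot)|}*|\nu|(x),
\]
and then check separately that each summand on the right is finite almost everywhere. Both conclusions of the lemma are then immediate.

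First, I would justify applying the kernel lemmas to the difference $\psi(r)=\bar\varphi(r)-(1-|r|^2)_+^{\delta_0}$. Since $\bar\varphi(r)=(1-|r|^2)_+^{\delta_0}$ for $|r|\geq 2/3$, the function $\psi$ has compact support in $[-2/3,2/3]$, and it is smooth and even on $\mathbb R$ (both $\bar\varphi$ and $(1-r^2)_+^{\delta_0}$ are smooth on $(-1,1)$ and even functions of $r$), so its radial extension $\psi_{(n)}$ to $\mathbb R^n$ is a Schwartz (in fact, $C_c^\infty$) function. In particular $|\widehat{\psi_{(n)}}(\xi)|\preceq|\xi|^{-N}$ for every $N$, so the hypotheses of Lemmas~\ref{lem:poisson}, \ref{lem:est-G} and \ref{lem:est-K-maximal} are satisfied for $\varphi=\psi$ with any $\epsilon>0$. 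Writing $K_R^{\psi}=\widetilde K_R^{\psi}+G_R^{\psi}$ via \eqref{eqn:KGG}, Lemma~\ref{lem:est-G} gives $|G_R^{\psi}(x)|\preceq 1/|D(x)|$ uniformly in $R$, while Lemma~\ref{lem:est-K-maximal} gives $\sup_R|\widetilde K_R^{\psi}|*|\nu|(x)\preceq M(\nu)(x)$. Since $K_R^{\delta_0}-\bar K_R^{\delta_0}=-K_R^{\psi}$, this yields the first half of the displayed bound. For the second half, combine with Lemma~\ref{lem:poisson-delta}, which states $|K_R^{\delta_0}(x)-\widetilde K_R^{\delta_0}(x)|\preceq 1/|D(x)|$, and use $|\bar K_R^{\delta_0}-\widetilde K_R^{\delta_0}|\leq|\bar K_R^{\delta_0}-K_R^{\delta_0}|+|K_R^{\delta_0}-\widetilde K_R^{\delta_0}|$.

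It remains to show the right side is finite a.e. The bound $M(\nu)(x)<\infty$ a.e.\ is exactly Lemma~\ref{lem:hardy-littlewood}. For the convolution, I would use \eqref{eqn:jacobian} and \eqref{eqn:weyl} to compute
\[
\int_G\frac{1}{|D(x)|}\,dx
=\frac{1}{|W|}\int_T|D(t)|\,dt
\preceq\int_T\prod_{\alpha\in A}\Big|\sin\tfrac{\langle\alpha,\xi\rangle}{2}\Big|\,dt<\infty,
\]
so $1/|D(\cdot)|\in L^1(G)$. Young's inequality then gives $\bigl(1/|D(\cdot)|\bigr)*|\nu|\in L^1(G)$, hence finite a.e. Taking $d\nu=f\,dx$ proves (i), and the same argument with general $\nu$ proves (ii).

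There is no real obstacle: the content is entirely in the preceding estimates, and the only point that deserves care is verifying the smoothness and rapid Fourier decay of $\psi_{(n)}$, which is what allows us to invoke Lemmas~\ref{lem:est-G} and \ref{lem:est-K-maximal} with the $\epsilon>0$ required (the raw kernel $K_R^{\delta_0}$ itself only has the borderline bound $|\widehat\varphi_{(n)}(\xi)|\preceq|\xi|^{-n}$, without any $\epsilon$, which is precisely why the mollification by $\bar\varphi$ is introduced).
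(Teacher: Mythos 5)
Your proof is correct and follows essentially the same route as the paper: both write the difference of multipliers $\bar\varphi-(1-|r|^2)_+^{\delta_0}$ as a compactly supported smooth function, apply Lemmas \ref{lem:poisson}, \ref{lem:est-G}, \ref{lem:est-K-maximal} and \ref{lem:poisson-delta} to obtain the pointwise bound by $M(\nu)+|D(\cdot)|^{-1}*|\nu|$, and conclude via Lemma \ref{lem:hardy-littlewood} together with the integrability of $1/|D|$ and Young's inequality. Your write-up is in fact slightly more explicit than the paper's (which only cites the lemmas) in checking the smoothness and rapid Fourier decay of the radial extension of the difference.
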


Using \eqref{eqn:chi-convolution} and the definitions of $V_R$ and $\bar K^{\delta_0}_R$, one can easily verify following relations.

\begin{lemma}\label{lem:filtration}
(i) If $R\le R'$, then
$$\bar K^{\delta_0}_R*V_{R'}=\bar K^{\delta_0}_R.$$
(ii) If $R\ge 6R'$, then
$$\bar K^{\delta_0}_R*V_{R'}=V_{R'}.$$
\end{lemma}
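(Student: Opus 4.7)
The proof is essentially a bookkeeping exercise with the supports of $\bar\varphi$ and $v$, combined with the orthogonality relation \eqref{eqn:chi-convolution}.

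The first step is to expand the convolution using the definitions of $\bar K^{\delta_0}_R$ and $V_{R'}$. Since both are finite sums (the factors $\bar\varphi(|\lambda+\rho|/R)$ and $v(|\lambda+\rho|/R')$ vanish for all but finitely many $\lambda$, because $\bar\varphi$ and $v$ are compactly supported), we may interchange sum and convolution freely. Applying \eqref{eqn:chi-convolution} termwise, all cross terms $d_\lambda \chi_\lambda * d_{\lambda'}\chi_{\lambda'}$ with $\lambda\neq\lambda'$ vanish, and we obtain
$$\bar K^{\delta_0}_R * V_{R'}(x)
=\sum_{\lambda\in\Lambda} \bar\varphi\!\left(\frac{|\lambda+\rho|}{R}\right) v\!\left(\frac{|\lambda+\rho|}{R'}\right) d_\lambda \chi_\lambda(x).$$

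It then suffices, in each case, to check that the product of the two scalar factors reduces to a single one. For (i), I observe that $\bar\varphi$ vanishes on $[1,\infty)$ (since $\bar\varphi(r)=(1-r^2)_+^{\delta_0}$ for $|r|\ge 2/3$). Hence whenever $\bar\varphi(|\lambda+\rho|/R)\neq 0$ we must have $|\lambda+\rho|<R\le R'$, so $|\lambda+\rho|/R'<1$ and therefore $v(|\lambda+\rho|/R')=1$. The product collapses to $\bar\varphi(|\lambda+\rho|/R)$, giving $\bar K^{\delta_0}_R$.

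For (ii), the roles are swapped. If $v(|\lambda+\rho|/R')\neq 0$ then $|\lambda+\rho|/R'<2$ (since $v$ is supported in $[-2,2]$), so $|\lambda+\rho|/R\le (2R')/R\le 1/3$ when $R\ge 6R'$; on this range $\bar\varphi\equiv 1$, so the product collapses to $v(|\lambda+\rho|/R')$, giving $V_{R'}$.

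There is no genuine obstacle here: the only thing to keep track of is that the relevant thresholds $1$, $2/3$, $1/3$, $2$ in the definitions of $\bar\varphi$ and $v$ are arranged precisely so that the inequalities $R\le R'$ (resp. $R\ge 6R'$) force one factor into its "$=1$" zone whenever the other factor is nonzero. The constant $6$ appears in (ii) because we need $2R'/R\le 1/3$.
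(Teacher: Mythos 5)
Your proof is correct and is exactly the argument the paper intends (it states the lemma with only the remark that it follows from \eqref{eqn:chi-convolution} and the definitions): the orthogonality relation reduces the convolution to the product of the scalar multipliers, and the support/plateau thresholds of $\bar\varphi$ and $v$ do the rest. The bookkeeping with the constants $1$, $1/3$, $2$, and $6$ is accurate.
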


We are now ready to prove the theorem.

\begin{proof}[Proof of Theorem \ref{thm:kolmogorov-L1}]
By part $(i)$ of Lemma \ref{lem:mollifiers}, it suffices to
find an $f\in L^1(G)$ such that
$$\limsup_{R\rightarrow\infty} \big|\bar K^{\delta_0}_R*f(x)\big|=\infty,
\ \text{a.e.}\ x\in G.$$
The function $f$ will be taken to be of the form
$$f=\sum_{j=1}^\infty \eta_j V_{R_j}*\mu_j$$
where $\{\eta_j>0\}$ is a suitably chosen summable sequence, $\{R_j>1\}$ is a suitably chosen increasing sequence, and each $\mu_j$ is a suitable Borel probability measure chosen from Lemma \ref{lem:kolmogorov-M}. By Lemma \ref{lem:V-L1-bound} we would have
$$\|f\|_{L^1(G)}\preceq \sum_{j=1}^\infty \eta_j<\infty.$$

We will choose $\eta_j, R_j, \mu_j$ inductively. Set $\eta_1=1/2$, $R_1=2$, and $\mu_1=\delta_e$ (the Dirac delta at $e$). Assume that $\eta_{j-1}, R_{j-1}, \mu_{j-1}$ have been chosen, we now choose $\eta_j, R_j, \mu_j$. First, we take $\eta_j>0$ to be such that
\begin{equation}\label{eta<eta}
\eta_j\le\eta_{j-1}/2
\end{equation}
and such that
\begin{equation}\label{etaK<1}
\eta_j \sup_{1\le R\le R_{j-1}}
\big\|\bar K^{\delta_0}_R\big\|_{L^\infty(G)}\le 1.
\end{equation}
With $\eta_j$ chosen, by Lemma \ref{lem:kolmogorov-M} we can find a probability measure $\mu_j$ such that
$$\limsup_{R\rightarrow\infty} 
\big|\widetilde K^{\delta_0}_R*\mu_j(x)\big|
>2^{j+1}\eta_j^{-1}$$
holds on a set $\widetilde E_j\subset G$ with $|G\backslash \widetilde E_j|<2^{-j-1}.$
With such an $\mu_j$ chosen, we can find $R_j$ large enough so that
$$R_j>6R_{j-1}$$
and so that
\begin{equation}\label{eqn:sup>L}
\sup_{6R_{j-1}<R<R_j} \big|\widetilde K_R^{\delta_0}*\mu_j(x)\big|
>2^{j}\eta_j^{-1}
\end{equation}
holds on a set $E_j\subset \widetilde E_j$ with
$$|G\backslash E_j|<2^{-j}.$$
By induction, this completes our choice of $f$.

Now let 
$$E=\bigcup_{k=2}^\infty\bigcap_{j\ge k} E_j.$$ 
It is easy to see that $|E|=1$. To finish the proof, it suffices to show
$$\limsup_{R\rightarrow\infty} 
\big|\bar K^{\delta_0}_R*f(x)\big|
=\infty,\ \text{a.e.}\ x\in E.$$
Note that $x\in E$ implies $x\in E_j$ for all sufficiently large $j$. Fix such an index $j_0$. For any $R$ satisfying
$$6R_{j_0-1}<R<R_{j_0},$$
by Lemma \ref{lem:filtration} we can write
\begin{align*}
\bar K^{\delta_0}_R*f(x)
&=\sum_{j=1}^{\infty} \eta_j \bar K^{\delta_0}_R*V_{R_j}*\mu_j(x)\\
&=\sum_{j<j_0} \eta_j V_{R_j}*\mu_j(x)
+\eta_{j_0} \bar K^{\delta_0}_R*\mu_{j_0}(x)
+\sum_{j>j_0} \eta_j \bar K^{\delta_0}_R*\mu_j(x)\\
&=I+II+III.
\end{align*}
Notice that, since
$$|I|\le \sum_{j=1}^{\infty} \eta_j |V_{R_j}*\mu_j(x)|$$
and
$$\Big\|\sum_{j=1}^{\infty} \eta_j |V_{R_j}*\mu_j|\Big\|_{L^1(G)}<\infty,$$
for a.e. $x\in E$, $|I|$ is bounded independent of $j_0$. Notice also that, by \eqref{eta<eta} and \eqref{etaK<1},
\begin{align*}
|III|
&\le \sum_{j>j_0} \eta_j \|\bar K^{\delta_0}_R\|_{L^\infty(G)}\\
&\le \Big (\sum_{j>j_0} \eta_j \Big )
\sup_{1\le R\le R_{j_0}} \|\bar K^{\delta_0}_R\|_{L^\infty(G)}\\
&\le 2\eta_{j_0+1} \sup_{1\le R\le R_{j_0}} \|\bar K^{\delta_0}_R\|_{L^\infty(G)}\\
&\le 2.
\end{align*}
To estimate $II$, we write
\begin{align*}
II
&=\eta_{j_0} \widetilde K^{\delta_0}_R*\mu_{j_0}(x)
+\eta_{j_0} \big(\bar K^{\delta_0}_R-\widetilde K^{\delta_0}_R\big)*\mu_{j_0}(x)\\
&=II_a+II_b.
\end{align*}
Notice that
\begin{align*}
\big |II_b\big|
&=\big|\big(\bar K^{\delta_0}_R-\widetilde K^{\delta_0}_R\big)
*\eta_{j_0}\mu_{j_0}(x)\big|\\
&\le \sup_{R>1} \big|\bar K^{\delta_0}_R-\widetilde K^{\delta_0}_R\big |*\nu(x)
\end{align*}
where $\nu=\sum_{j=1}^\infty \eta_{j}\mu_{j}$. By part $(ii)$ of Lemma \ref{lem:mollifiers}, the last expression is finite for $\text{a.e.}\ x\in G$. Thus for a.e. $x\in E$, $|II_b|$ is bounded independent of $j_0$ and $R$. On the other hand, by \eqref{eqn:sup>L} we can find $R\in (6R_{j_0-1},R_{j_0})$ such that
$$\big|\widetilde K_R^{\delta_0}*\mu_{j_0}(x)\big|
> 2^{j_0}\eta_{j_0}^{-1},$$
that is, 
$$|II_a|> 2^{j_0}.$$

Combining these estimates, we see that, for a.e. $x\in E$,
\begin{align*}
\limsup_{R\rightarrow\infty} 
\big|\bar K^{\delta_0}_R*f(x)\big|
&\ge \limsup_{j_0\rightarrow\infty} 2^{j_0}-O_x(1)\\
&=\infty.
\end{align*}
This completes the proof of Theorem \ref{thm:kolmogorov-L1}.
\end{proof}

\section{Proof of Lemma \ref{lem:kolmogorov-M}}\label{sec:random-points}

We now prove Lemma \ref{lem:kolmogorov-M}. The key ingredient is a generalization of the classical Glivenko-Cantelli theorem on empirical measures (cf. \cite{Varadarajan1958}).

\begin{lemma}\label{lem:glivenko-cantelli-varadarajan}
Let $G$ be a compact Lie group and $\{y_j\}_{j=1}^\infty$ a sequence of random points chosen independently and uniformly from $G$. Then, almost surely,
the probability measure $\frac{1}{N}\sum_{j=1}^N\delta_{y_j}$ converges weakly to the Haar measure $dy$, as $N\rightarrow\infty$; that is, for any $f\in C(G)$,
$$\lim_{N\rightarrow\infty}\frac{1}{N}\sum_{j=1}^N f(y_j)
=\int_G f(y) dy.$$
\end{lemma}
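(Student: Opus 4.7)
The plan is to combine the strong law of large numbers for a single test function with a separability argument in order to handle all test functions at once on a common probability-one event.

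First, I would fix $f \in C(G)$. Since the $y_j$ are independent with law equal to the normalized Haar measure $dy$, the random variables $f(y_j)$ are iid and bounded, hence in $L^1$, with common mean $\int_G f(y)\,dy$. Kolmogorov's strong law of large numbers then yields
$$\frac{1}{N}\sum_{j=1}^N f(y_j) \longrightarrow \int_G f(y)\,dy$$
almost surely; call the corresponding probability-one event $\Omega_f$.

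Next I would exploit separability. Since $G$ is a compact metric space, $C(G)$ endowed with the uniform norm is separable; fix a countable dense subset $\{f_k\}_{k\ge 1}$ and set $\Omega_0 := \bigcap_{k\ge 1} \Omega_{f_k}$. As a countable intersection of probability-one events, $\Omega_0$ itself has probability one.

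Finally, I would upgrade the convergence on $\Omega_0$ from the dense family to all of $C(G)$ by a straightforward approximation. Fix $\omega \in \Omega_0$, $f \in C(G)$, and $\varepsilon > 0$, and choose $k$ with $\|f - f_k\|_\infty < \varepsilon$. Each empirical measure $\mu_N^\omega := \frac{1}{N}\sum_{j=1}^N \delta_{y_j(\omega)}$ is a probability measure, so
$$\Bigl|\mu_N^\omega(f) - \mu_N^\omega(f_k)\Bigr| \le \|f-f_k\|_\infty < \varepsilon
\quad\text{and}\quad
\Bigl|\textstyle\int f\,dy - \int f_k\,dy\Bigr| < \varepsilon.$$
By the definition of $\Omega_0$, for all sufficiently large $N$ we have $|\mu_N^\omega(f_k) - \int f_k\,dy| < \varepsilon$, and a triangle inequality bounds $|\mu_N^\omega(f) - \int f\,dy|$ by $3\varepsilon$. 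Since $\varepsilon > 0$ was arbitrary, the weak convergence holds for every $f \in C(G)$ on the single event $\Omega_0$.

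There is no real obstacle: the argument is a soft-analysis application of the SLLN together with separability of $C(G)$, and the only property of $G$ that enters is that it is a compact metric space on which Haar measure is a Borel probability measure (the Lie structure plays no role here, and could be relaxed accordingly).
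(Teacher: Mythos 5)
Your proof is correct. The paper does not actually prove this lemma---it simply cites Varadarajan's theorem on the almost sure weak convergence of empirical measures---and your argument (strong law of large numbers for each fixed $f$, separability of $C(G)$ for a compact metric space, intersection over a countable dense family, and a $3\varepsilon$ approximation) is precisely the standard proof of that cited result. Your closing observation that only the compact metric structure of $G$ is used is also accurate.
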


In what follows we will always assume that $\{y_j\}_{j=1}^\infty$ is as in Lemma \ref{lem:glivenko-cantelli-varadarajan}.

\begin{lemma}\label{lem:glivenko-cantelli-x}
Almost surely, we have
$$\lim_{N\rightarrow\infty}\frac{1}{N}\sum_{j=1}^N f(x y_j^{-1})
=\int_G f(y)dy$$
for any $f\in C(G)$ and $x\in G$.
\end{lemma}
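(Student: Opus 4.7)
The plan is to reduce Lemma~\ref{lem:glivenko-cantelli-x} directly to Lemma~\ref{lem:glivenko-cantelli-varadarajan}, exploiting the bi-invariance of Haar measure on the compact group $G$. Let $\Omega_0$ be the almost-sure event on which the empirical measures $\mu_N = \frac{1}{N}\sum_{j=1}^N \delta_{y_j}$ converge weakly to $dy$. The crucial observation is that weak convergence of probability measures yields $\int g\, d\mu_N \to \int g\, dy$ \emph{simultaneously} for every $g \in C(G)$, so the single null set outside $\Omega_0$ works uniformly across all continuous test functions.

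Next I would fix $f \in C(G)$ and $x \in G$, and set $g_x(y) = f(xy^{-1})$. Since $(x,y) \mapsto xy^{-1}$ is continuous on $G \times G$, the function $g_x$ lies in $C(G)$, so applying the weak convergence on $\Omega_0$ to the test function $g_x$ yields $\frac{1}{N}\sum_{j=1}^N f(xy_j^{-1}) \to \int_G f(xy^{-1})\, dy$. A change of variables $z = xy^{-1}$, legitimate because Haar measure on a compact group is invariant under both left translation and inversion, identifies this limit with $\int_G f(z)\, dz$, giving the claim.

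There is essentially no obstacle here: the almost-sure event does not depend on the choice of $f$ or $x$, so pointwise convergence in $x \in G$ follows immediately from the weak-convergence form of Lemma~\ref{lem:glivenko-cantelli-varadarajan}. The only subtle point is to notice that one must not introduce $x$ before selecting the exceptional set -- but this is handled automatically by the fact that weak convergence is a statement about \emph{all} continuous test functions on the common event $\Omega_0$. If one ever needed convergence uniform in $x \in G$ (not required by this lemma), one could additionally invoke the compactness of $G$ together with the equicontinuity of the family $\{g_x\}_{x \in G}$ inherited from the uniform continuity of $f$, but no such enhancement is needed here.
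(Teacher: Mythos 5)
Your proof is correct and follows essentially the same route as the paper, which simply applies Lemma \ref{lem:glivenko-cantelli-varadarajan} to the test function $y\mapsto f(xy^{-1})$ on the single almost-sure event where weak convergence holds for all continuous test functions, then uses invariance of Haar measure to identify the limit. Your write-up just makes explicit the (correct) point that the exceptional null set is independent of $f$ and $x$.
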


\begin{proof}
This follows immediately from Lemma \ref{lem:glivenko-cantelli-varadarajan} by taking the function to be $f(x y^{-1})$.
\end{proof}

Let $r_0>0$ be small enough that	
$$B(0,2r_0)=\{\xi\in\mathfrak t: |\xi|< 2 r_0\}\subset Q_0,$$
and that 
$$d(\exp(\xi),e)=|\xi|,\ \forall\xi\in B(0,2r_0).$$ 
For $R_0$ sufficiently large,
let $\psi_{R_0}\in C[0,\infty)$ be a nonnegative continuous function satisfying
\begin{alignat*}{2}
\psi_{R_0}(r)&=R_0^{n},&&\quad \text{if }r\le 1/R_0,\\
\psi_{R_0}(r)&=r^{-n},&&\quad \text{if }1/R_0 \le r \le r_0,\\
\psi_{R_0}(r)&\le r^{-n},&&\quad \text{if }r_0 \le r \le 2r_0,\\
\psi_{R_0}(r)&=0, &&\quad \text{if } r\ge 2r_0.
\end{alignat*}
Let
$$k_{R_0}(x)=\psi_{R_0}(d(x,e)).$$

\begin{lemma}\label{lem:k>log}
$$\int_G k_{R_0}(x)dx\succeq \log {R_0}.$$
\end{lemma}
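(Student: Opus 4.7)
The plan is to reduce $\int_G k_{R_0}(x)\,dx$ to a one-dimensional radial integral with weight $r^{n-1}\,dr$ on $[0,2r_0]$, and then to extract the logarithm from the $r^{-1}$ contribution produced by the middle piece $\psi_{R_0}(r)=r^{-n}$ on $[1/R_0,r_0]$. The geometric mechanism is simply geodesic normal coordinates at $e$.

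First I would note that $k_{R_0}$ is supported in $B(e,2r_0)$, and that the Riemannian exponential map $\exp_G:\mathfrak g\to G$ associated with the bi-invariant metric is a local diffeomorphism at $0$ with $d(\exp_G\eta,e)=|\eta|$ for $|\eta|$ small. Hence, shrinking $r_0$ if necessary, $\exp_G$ restricts to a diffeomorphism from the Euclidean ball $\{|\eta|<2r_0\}\subset\mathfrak g$ onto $B(e,2r_0)$, with Jacobian $J(\eta)$ satisfying $J(0)=1$ and therefore $J(\eta)\succeq 1$ on this ball. Changing variables and passing to Euclidean polar coordinates on $\mathfrak g\cong\mathbb R^n$ gives
\[
\int_G k_{R_0}(x)\,dx\;\succeq\;\int_{\{|\eta|<2r_0\}}\psi_{R_0}(|\eta|)\,d\eta\;=\;\omega_{n-1}\int_0^{2r_0}\psi_{R_0}(r)\,r^{n-1}\,dr,
\]
where $\omega_{n-1}$ denotes the surface area of the unit sphere in $\mathbb R^n$. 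Since $\psi_{R_0}(r)r^{n-1}=r^{-1}$ on $[1/R_0,r_0]$, the radial integral is at least
\[
\int_{1/R_0}^{r_0}r^{-1}\,dr=\log(r_0R_0)\;\succeq\;\log R_0
\]
for $R_0$ larger than some constant depending only on $r_0$.

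I do not foresee any substantive obstacle. The only technical point is the Jacobian bound $J(\eta)\succeq 1$, which follows from smoothness of $\exp_G$ and $D\exp_G(0)=\mathrm{id}$. An alternative that stays entirely within the paper's own machinery uses the fact that $k_{R_0}$ is central (since the bi-invariant metric makes $d(\cdot,e)$ conjugation-invariant): the Weyl integration formula (\ref{eqn:jacobian}), the asymptotic $|D(\xi)|^2\succeq\prod_{\alpha\in A}\langle\alpha,\xi\rangle^2$ for small $\xi\in\mathfrak t$ coming from \eqref{eqn:weyl}, and polar coordinates on $\mathfrak t\cong\mathbb R^m$ together reconstruct the same $r^{n-m}\cdot r^{m-1}=r^{n-1}$ radial density, and the argument concludes identically.
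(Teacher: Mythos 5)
Your proof is correct, but your primary route is genuinely different from the paper's. The paper exploits the fact that $k_{R_0}$ is central: it applies the Weyl integration formula \eqref{eqn:jacobian} to reduce to an integral over the maximal torus, lower-bounds $|D(\xi)|^2$ by $\big|\prod_{\alpha\in A}\langle\alpha,\xi\rangle\big|^2$ via \eqref{eqn:weyl}, and then counts homogeneity degrees in polar coordinates on $\mathfrak t\cong\mathbb R^m$, getting $r^{-n+(n-m)+(m-1)}=r^{-1}$ on $[1/R_0,r_0]$ --- exactly the ``alternative'' you sketch in your last sentence. Your main argument instead works directly on the $n$-dimensional group via geodesic normal coordinates, using that the Haar measure is comparable to $J(\eta)\,d\eta$ with $J\succeq 1$ near $0$ and that $d(\exp_G\eta,e)=|\eta|$ below the injectivity radius; this trades the root-system bookkeeping for the single elementary fact that the volume density in normal coordinates is bounded below, and it makes transparent that the lemma is really just the statement $\int_{B(e,2r_0)} d(x,e)^{-n}\,dx\approx\log$ on any $n$-dimensional Riemannian manifold (compare \eqref{eqn:ahlfors}). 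The paper's route has the advantage of staying within the Weyl-integration machinery already set up for the kernel estimates (so the same computation recurs in Lemmas \ref{lem:est-K-L1} and \ref{lem:kolmogorov-2}), but both are complete; the only points to make explicit in yours are that $r_0$ may be taken below the injectivity radius at the cost of a constant depending only on $G$, and (in the alternative) that the angular average of $\prod_{\alpha\in A}\langle\alpha,\omega\rangle^2$ over the unit sphere of $\mathfrak t$ is a strictly positive constant.
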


\begin{proof}
Since $k_{R_0}$ is a central function, by \eqref{eqn:jacobian} we have
\begin{align*}
\int_G k_{R_0}(x)dx
=\frac{1}{|W|}\int_T k_{R_0}(t)|D(t)|^2dt.
\end{align*}
	Changing the variable to $\xi\in Q$, this integral can be bounded below by
\begin{align*}
\int_{1/R_0\le|\xi|\le r_0} |\xi|^{-n} |D(\xi)|^2 d\xi.
\end{align*}
Using \eqref{eqn:weyl} and polar coordinates, this can be bounded below by
\begin{align*}
\int_{1/R_0\le|\xi|\le r_0} |\xi|^{-n} 
\Big |\prod\limits_{\alpha \in A} \langle\alpha ,\xi \rangle\Big |^2 d\xi
&\succeq \int_{1/R_0}^{r_0} r^{-n+(n-m)+(m-1)}dr\\
&= \int_{1/R_0}^{r_0} r^{-1}dr\\
&\succeq \log R_0.
\end{align*}
This completes the proof.
\end{proof}

Combining Lemma \ref{lem:glivenko-cantelli-x} and \ref{lem:k>log}, we get

\begin{lemma}\label{lem:sigma k>log}
Almost surely, we have
$$\lim_{N\rightarrow\infty}\frac{1}{N}\sum_{j=1}^N k_{R_0}(x y_j^{-1})
\succeq \log {R_0}$$
for every $x\in G$.
\end{lemma}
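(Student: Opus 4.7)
The plan is simply to combine the two preceding lemmas. The critical observation is that $k_{R_0}$ lies in $C(G)$: by construction $\psi_{R_0}\in C[0,\infty)$ and the distance function $d(\cdot,e)$ is continuous on $G$, so $k_{R_0}(x)=\psi_{R_0}(d(x,e))$ is a continuous function on $G$ and therefore an admissible test function in Lemma \ref{lem:glivenko-cantelli-x}.

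Applying Lemma \ref{lem:glivenko-cantelli-x} with $f=k_{R_0}$, I obtain that, almost surely,
$$\lim_{N\to\infty}\frac{1}{N}\sum_{j=1}^N k_{R_0}(xy_j^{-1}) = \int_G k_{R_0}(y)\, dy$$
for every $x\in G$, where the right-hand side is a constant independent of $x$. Lemma \ref{lem:k>log} bounds this constant below by a fixed positive multiple of $\log R_0$, which yields the claim.

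There is no real obstacle here: all the substantive work has already been packaged into the lower bound of Lemma \ref{lem:k>log} and into the uniform Glivenko--Cantelli statement of Lemma \ref{lem:glivenko-cantelli-x}. The only point worth emphasizing is the order of quantifiers. Since the almost sure event produced by Varadarajan's theorem (and hence transferred to Lemma \ref{lem:glivenko-cantelli-x}) can be chosen independently of the test function and of $x$, we legitimately obtain a single null event outside of which the convergence holds simultaneously for every $x\in G$, as the statement requires.
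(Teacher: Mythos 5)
Your proof is correct and is essentially the paper's own argument: the authors obtain this lemma precisely by combining Lemma \ref{lem:glivenko-cantelli-x} (applied to the continuous function $k_{R_0}$) with the lower bound of Lemma \ref{lem:k>log}. Your remark on the order of quantifiers --- that the almost sure event from the Glivenko--Cantelli statement is a single null set working simultaneously for all $f\in C(G)$ and all $x\in G$ --- is exactly the point that makes the combination legitimate.
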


Fix $\varepsilon>0$. By a standard limiting argument, from Lemma \ref{lem:sigma k>log} we get 

\begin{lemma}\label{lem:kolmogorov-1}
Almost surely, there exist $N\ge 1$ and $E\subset G$ such that
$$\frac{1}{N}\sum_{j=1}^N k_{R_0}(xy_j^{-1})\succeq \log {R_0},\ \forall x\in E$$
and $|G\backslash E|<\varepsilon$.
\end{lemma}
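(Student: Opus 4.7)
The plan is to upgrade the pointwise limit in Lemma~\ref{lem:sigma k>log} to a uniform lower bound valid on a set of measure close to $1$. This is a standard Egorov-type maneuver; because the averaging functions are continuous in $x\in G$, I can execute it by hand without invoking Egorov explicitly.

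I would fix a realization of $\{y_j\}$ on which the conclusion of Lemma~\ref{lem:sigma k>log} holds, and let $c_0>0$ denote the absolute constant implicit in $\succeq$, so that
$$\lim_{N\to\infty}\frac{1}{N}\sum_{j=1}^N k_{R_0}(xy_j^{-1})\ge c_0\log R_0,\qquad \forall x\in G.$$
Then I set $c=c_0/2$ and, for each integer $M\ge 1$, define
$$F_M=\bigcap_{N\ge M}\left\{x\in G:\ \frac{1}{N}\sum_{j=1}^N k_{R_0}(xy_j^{-1})\ge c\log R_0\right\}.$$
Since $k_{R_0}=\psi_{R_0}(d(\cdot,e))$ is continuous on $G$ and right translation is continuous, each of the finite averages is continuous in $x$; hence each set in the intersection above is closed, making $F_M$ closed and Borel measurable, and the inclusions $F_M\subseteq F_{M+1}$ are immediate.

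The pointwise limit together with the choice $c<c_0$ ensures that every $x\in G$ eventually satisfies $\frac{1}{N}\sum_{j=1}^N k_{R_0}(xy_j^{-1})\ge c\log R_0$, hence belongs to $F_M$ for some $M$. Therefore $\bigcup_{M\ge 1}F_M=G$, and continuity of Haar measure from below yields $|F_M|\to 1$. I would then choose $M$ large enough that $|G\setminus F_M|<\varepsilon$ and take $N:=M$, $E:=F_M$; evaluating the defining inequality of $F_M$ at $N=M$ produces the desired lower bound on $E$.

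I do not anticipate any serious obstacle: the argument is essentially formal once Lemma~\ref{lem:sigma k>log} is in hand. The only point requiring explicit verification is the Borel measurability of $F_M$, which is handled by the continuity of $k_{R_0}$ and of the group operation.
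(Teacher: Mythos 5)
Your argument is correct and is precisely the ``standard limiting argument'' that the paper invokes without detail for this lemma: you pass from the pointwise limit of Lemma~\ref{lem:sigma k>log} to a uniform bound on a large set via the increasing closed sets $F_M$ and continuity of Haar measure from below. No gaps; the measurability and nestedness points you flag are handled correctly.
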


Pick $\xi_j(x)\in Q$ so that $\exp(\xi_j(x))$ is conjugate to $xy_j^{-1}$. Note that $|\xi_j(x)|$ is independent of the choice of $\xi_j(x)$. The following lemma will be used in the proof of Lemma \ref{lem:kolmogorov-2}. 

\begin{lemma}\label{lem:kronecker}
Almost surely, for almost every $x\in G$, the numbers 
$\big\{|\xi_j(x)|\big\}_{j=1}^\infty$ 
are linearly independent over $\mathbb Q$.
\end{lemma}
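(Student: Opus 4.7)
\emph{Proof plan.} The plan is to use Fubini's theorem to reduce the statement to showing that for each fixed $x\in G$, almost surely the sequence $\{|\xi_j(x)|\}_{j=1}^\infty$ is $\mathbb{Q}$-linearly independent; integrating over $x$ then yields the conclusion. So fix $x\in G$ and proceed in three steps.

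First, translation invariance of Haar measure implies that $\{xy_j^{-1}\}_{j=1}^\infty$ is i.i.d.\ Haar-uniform on $G$. Consequently $\{|\xi_j(x)|\}_{j=1}^\infty$ is an i.i.d.\ sequence of real random variables with common law $\mu$ equal to the pushforward of $dg$ under the $W$-invariant map $\phi:G\to[0,\infty)$ defined by $\phi(g)=|\xi|$, where $\xi\in Q$ is chosen so that $\exp(\xi)$ is conjugate to $g$.

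Second, I would verify that $\mu$ is absolutely continuous (and in particular atomless) with respect to Lebesgue measure on $[0,\infty)$. For any $F\in C[0,\infty)$, the function $F\circ\phi$ is central, so the Weyl integration formula \eqref{eqn:jacobian} gives
$$\int_G F(\phi(g))\,dg=\frac{1}{|W|}\int_T F(|\xi(t)|)|D(t)|^2\,dt,$$
and identifying $T$ with $\mathfrak{t}/\Gamma$ and then passing to polar coordinates in $\mathfrak{t}\cong\mathbb{R}^m$ turns the right-hand side into $\int_0^\infty F(r)h(r)\,dr$ for some locally bounded density $h(r)$. This is the only nontrivial step and is the main (though quite minor) obstacle in the argument.

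Third, fix indices $j_1<\cdots<j_k$ and rationals $(q_1,\ldots,q_k)\in\mathbb{Q}^k\setminus\{0\}$ with, say, $q_k\neq 0$. Since the variables $|\xi_{j_i}(x)|$ are independent, conditioning on $|\xi_{j_i}(x)|$ for $i<k$ reduces the event $\sum_{i=1}^k q_i|\xi_{j_i}(x)|=0$ to the event that $|\xi_{j_k}(x)|$ equals a specific (random) real number, which has conditional probability zero by atomlessness of $\mu$. Taking a countable union over all such tuples of indices and rationals gives that, with probability one, no finite $\mathbb{Q}$-linear combination of the $|\xi_j(x)|$ vanishes, as desired.
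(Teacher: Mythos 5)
Your proposal is correct and follows essentially the same route as the paper: Fubini to fix $x$, equidistribution of $\{xy_j^{-1}\}$ together with the Weyl integration formula to control the law of the $\xi_j(x)$, and a measure-zero argument for each fixed rational dependence followed by a countable union. The only cosmetic difference is that you work with the one-dimensional pushforward law of $|\xi_j(x)|$ and condition on all but one variable, whereas the paper shows directly (by induction and Fubini) that the dependence set $Z_N\subset Q^N$ is Lebesgue-null and uses absolute continuity of the joint law with density $\prod_i|D(\xi_i)|^2$; these are the same argument in different packaging.
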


\begin{proof}
By Fubini's theorem, it suffices to show that for every $x\in G$, the numbers 
$\big\{|\xi_j(x)|\big\}_{j=1}^\infty$ 
are almost surely linearly independent over $\mathbb Q$. 

Fix $x\in G$. By a standard limiting argument, it suffices to show that for any $N\ge 2$, the numbers $\big\{|\xi_j(x)|\big\}_{j=1}^N$ 
are linearly dependent over $\mathbb Q$ with probability $0$. To this end, let
\begin{align*}
Z_N=\big\{
&(\xi_1,\cdots,\xi_N)\in Q^N:\\
&|\xi_1|, \cdots, |\xi_N| \text{ are linearly dependent over }\mathbb Q
\big\}.
\end{align*}
By induction on $N$ and Fubini's theorem, it is easy to see that
\begin{equation}\label{eqn:Z=0}
\int_{Q^N} \mathds 1_{Z_N}(\xi_1, \cdots, \xi_N)
d\xi_1\cdots d\xi_N=0.
\end{equation}
Since $\{x y_j^{-1}\}_{j=1}^N$ and $\{y_j\}_{j=1}^N$ are equally distributed, by \eqref{eqn:jacobian} we have
\begin{align*}
&\mathbb P\big(|\xi_1(x)|, \cdots, |\xi_N(x)| \text{ are linearly dependent over }\mathbb Q\big)\\
=&C\int_{Q^N} \mathds 1_{Z_N}(\xi_1, \cdots, \xi_N)
|D(\xi_1)|^2\cdots|D(\xi_N)|^2d\xi_1\cdots d\xi_N.
\end{align*}
But by \eqref{eqn:Z=0}, this integral equals $0$. This completes the proof.
\end{proof}

We are now ready to prove Lemma \ref{lem:kolmogorov-M}. Recall that 
$$\widetilde K_R^{\delta_0}(\exp \xi)
={C} \mathds 1_{Q_0}(\xi)
\frac{\prod_{\alpha\in A} {\langle \alpha, \xi\rangle}}{D( \xi)}
R^n \widetilde J_{n-1/2}(R|\xi|)
,\ \xi\in Q.$$
Note that we have the asymptotic form (cf. \cite[p.~199]{Watson1995}):
\begin{equation}\label{eqn:bessel}
R^n \widetilde J_{n-1/2}(R|\xi|)=\sqrt{\frac{2}{\pi}}\frac{\cos(R|\xi|-\frac{n\pi}{2})}{|\xi|^n}
+O\left (\frac{1}{R|\xi|^{n+1}}\right )
,\ R|\xi|\rightarrow\infty.
\end{equation}

\begin{lemma}\label{lem:kolmogorov-2}
Almost surely, there exist $N\ge 1$ and $E\subset G$ such that
$$\limsup_{R\rightarrow\infty} 
\frac{C}{N}\sum_{j=1}^N \widetilde K_R^{\delta_0}(x y_j^{-1}) 
\succeq \log R_0,\ \forall x\in E$$
and $|G\backslash E|<\varepsilon$.
\end{lemma}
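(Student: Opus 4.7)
The plan is to combine the empirical lower bound from Lemma \ref{lem:kolmogorov-1} with the $\mathbb Q$-linear independence from Lemma \ref{lem:kronecker}, using the continuous Kronecker--Weyl equidistribution theorem to align the oscillating factor $\cos(R|\xi_j(x)| - n\pi/2)$ in the Bessel asymptotic \eqref{eqn:bessel} for all $j \le N$ simultaneously. This will produce a sequence $R_k \to \infty$ along which each $\widetilde K_{R_k}^{\delta_0}(xy_j^{-1})$ is bounded below in a way that can be compared term-by-term with $k_{R_0}(xy_j^{-1})$.

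To begin, I would invoke Lemmas \ref{lem:kolmogorov-1} and \ref{lem:kronecker} in tandem: almost surely, there exist $N \ge 1$ and $E \subset G$ with $|G \setminus E| < \varepsilon$ such that every $x \in E$ satisfies $\frac{1}{N}\sum_{j=1}^N k_{R_0}(xy_j^{-1}) \succeq \log R_0$ and has $|\xi_1(x)|, \ldots, |\xi_N(x)|$ linearly independent over $\mathbb Q$ (in particular strictly positive). Fixing such an $x$, the continuous Kronecker--Weyl theorem says that the one-parameter orbit $R \mapsto (R|\xi_j(x)|)_{j=1}^N$ is equidistributed modulo $2\pi$ on the $N$-torus; hence for any prescribed $\delta > 0$ I can find $R_k \to \infty$ with
$$\cos\!\bigl(R_k|\xi_j(x)| - n\pi/2\bigr) \ge 1 - \delta \quad \text{for every } j = 1,\dots,N.$$

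Combining Lemma \ref{lem:poisson-delta} with the asymptotic \eqref{eqn:bessel}, for $\xi_j(x) \in Q_0$ we have
$$\widetilde K_{R_k}^{\delta_0}(xy_j^{-1}) = h(\xi_j(x))\sqrt{\tfrac{2}{\pi}}\,\frac{\cos(R_k|\xi_j(x)| - n\pi/2)}{|\xi_j(x)|^n} + O\!\Big(\frac{1}{R_k|\xi_j(x)|^{n+1}}\Big),$$
where $h(\xi) := C \prod_{\alpha\in A}\langle\alpha,\xi\rangle/D(\xi)$; by \eqref{eqn:weyl}, $h$ extends continuously to a nonzero real value at $\xi = 0$, so after possibly shrinking $r_0$ and absorbing a sign into the unspecified constant $C$ of the statement, we may assume $h(\xi) \succeq 1$ on $B(0, 2r_0)$. (Terms with $\xi_j(x) \notin Q_0$ vanish and are dropped.) Since $x$ and $N$ are fixed while $R_k \to \infty$, the sum of the error terms is $O(R_k^{-1})\cdot\frac{1}{N}\sum_j |\xi_j(x)|^{-n-1} \to 0$, and letting $k \to \infty$ then $\delta \to 0$ yields
$$\limsup_{R\rightarrow\infty}\frac{C}{N}\sum_{j=1}^N \widetilde K_R^{\delta_0}(xy_j^{-1}) \succeq \frac{1}{N}\sum_{j:\,\xi_j(x)\in Q_0} |\xi_j(x)|^{-n}.$$
To conclude, the support of $\psi_{R_0}$ lies in $[0,2r_0]$ and $B(0,2r_0) \subset Q_0$, so $\psi_{R_0}(r) \le r^{-n}$ on its entire support (the plateau value $R_0^n$ satisfies $R_0^n \le r^{-n}$ for $r \le 1/R_0$). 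Hence $|\xi_j(x)|^{-n} \ge k_{R_0}(xy_j^{-1})$ term-by-term, and the right-hand side above is $\succeq \frac{1}{N}\sum_{j=1}^N k_{R_0}(xy_j^{-1}) \succeq \log R_0$, as desired.

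The main obstacle is the Kronecker step: aligning $N$ oscillations with a single parameter $R$ is impossible in the presence of any nontrivial rational dependency among the $|\xi_j(x)|$, so the $\mathbb Q$-linear independence supplied by Lemma \ref{lem:kronecker} is precisely the non-resonance condition that makes the argument go through.
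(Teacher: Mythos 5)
Your proposal is correct and follows essentially the same route as the paper's proof: combine Lemmas \ref{lem:kolmogorov-1} and \ref{lem:kronecker}, apply Kronecker's theorem to simultaneously align the cosines in the Bessel asymptotic \eqref{eqn:bessel}, kill the $O(1/R)$ error, and compare $|\xi_j(x)|^{-n}$ term-by-term with $k_{R_0}(xy_j^{-1})$. The only cosmetic difference is that the paper observes via \eqref{eqn:weyl} that the weight $\prod_{\alpha\in A}\langle\alpha,\xi\rangle/D(\xi)$ is bounded below on all of $Q_0$ (not just on $B(0,2r_0)$), but since $k_{R_0}$ is supported in $B(0,2r_0)$ this does not affect the argument.
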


\begin{proof}
Let $N$ and $E$ be as in Lemma \ref{lem:kolmogorov-1}. By Lemma \ref{lem:kronecker}, we may assume that for any $x\in E$ the numbers $\big\{|\xi_j|\big\}_{j=1}^N=\big\{|\xi_j(x)|\big\}_{j=1}^N$ are linearly independent over $\mathbb Q$; in particular, $|\xi_j|\neq 0,\ j=1,\cdots,N$.

Fix $x\in E$. By \eqref{eqn:bessel} we can write
\begin{align*}
\frac{C}{N}\sum_{j=1}^N \widetilde K_R^{\delta_0}(x y_j^{-1})
&=\frac{1}{N}\sum_{j=1}^N 
\eta(\xi_j)\frac{\cos(R|\xi_j|-\frac{n\pi}{2})}{|\xi_j|^n}
+O\left (\frac{1}{RN}\sum_{j=1}^N \frac{1}{|\xi_j|^{n+1}}\right )\\
&=I+II
\end{align*}
where $\eta(\xi)$ is a function satisfying
$$\eta(\xi)\succeq \mathds 1_{Q_0}(\xi).$$
Since $N$ is fixed, we have
$$\lim_{R\rightarrow\infty} II=0.$$
On the other hand, since $\big\{|\xi_j|\big\}_{j=1}^N$ are are linearly independent over $\mathbb Q$, by Kronecker's theorem (cf. \cite[Chapter~23]{HardyWright2008}), 
\begin{align*}
\limsup_{R\rightarrow\infty} I
&=\frac{1}{N}\sum_{j=1}^N \eta(\xi_j)\frac{1}{|\xi_j|^n}\\
&\succeq \frac{1}{N}\sum_{j=1}^N \mathds 1_{Q_0}(\xi_j) \frac{1}{|\xi_j|^n}\\
&\ge \frac{1}{N}\sum_{j=1}^N k_{R_0}(x y_j^{-1})\\
&\succeq\log R_0
\end{align*}
where we have used Lemma \ref{lem:kolmogorov-1} in the last inequality. Combining $I$ and $II$ now gives the desired result.
\end{proof}

\begin{proof}[Proof of Lemma \ref{lem:kolmogorov-M}]
Lemma \ref{lem:kolmogorov-M} now follows from Lemma \ref{lem:kolmogorov-2} by taking
$$\mu=\frac{1}{N}\sum_{j=1}^N \delta_{y_j}$$
and $R_0$ to be sufficiently large. This finishes the proof of Lemma \ref{lem:kolmogorov-M}.
\end{proof}

\section{Remarks on localization}\label{sec:remarks}
In this section, we make two remarks concerning localization properties of the Bochner-Riesz mean $S_R^{{(n-1)}/{2}}(f)$ on $L^1(G)$. The first remark (stated as Proposition \ref{pointwise-localization} below) shows that, as in the case of $\mathbb T^n$ $(n\ge 2)$, pointwise localization fails on $G$ as long as $G$ has rank $m\neq 1$ (for the case $m=1$, cf. Mayer \cite{Mayer1968}, \cite{Mayer1968a}). This follows from the uniform boundedness principle and an observation of Bochner \cite{Bochner1936} and Stein \cite{Stein1961}. The second remark (Proposition \ref{ae-localization} below) shows that, although localization fails in the pointwise sense, it holds in an almost everywhere sense. This a.e. localization property is only valid in the noncommutative setting. It has been shown by Stein and Weiss \cite{SteinWeiss1971} that there exists an $f\in L^1(\mathbb T^n)$ $(n\ge 2)$, supported in an arbitrarily small neighborhood of $0$, such that $S_R^{{(n-1)}/{2}}(f)$ diverges almost everywhere on $\mathbb T^n$. 

\begin{proposition}\label{pointwise-localization}
Suppose $G$ has rank $m\neq 1$. Then there exits an $f\in L^1(G)$ which vanishes in a neighborhood of $e\in G$ and and which satisfies
$$\limsup_{R\rightarrow\infty} \big|S_{R}^{(n-1)/2}(f)(e)\big|=\infty.$$
\end{proposition}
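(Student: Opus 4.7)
The plan is to apply the Banach--Steinhaus (uniform boundedness) theorem to a suitable family of linear functionals, reducing the proposition to a kernel-blowup estimate of Bochner--Stein type.

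Fix $\epsilon > 0$ small and let $L^1_\epsilon := \{f \in L^1(G) : f \equiv 0 \text{ a.e.\ on } B(e,\epsilon)\}$, a closed subspace of $L^1(G)$. For each $R>1$ the linear functional
\[
T_R : L^1_\epsilon \to \mathbb{C}, \qquad T_R(f) = S_R^{(n-1)/2}(f)(e) = \int_G K_R^{(n-1)/2}(y^{-1})\,f(y)\,dy,
\]
is bounded because $K_R^{(n-1)/2}$ is a finite sum of smooth characters. Using bi-invariance of $d$ (so $d(y^{-1},e) = d(y,e)$), its operator norm equals
\[
\|T_R\|_{(L^1_\epsilon)^*} \;=\; \|K_R^{(n-1)/2}\|_{L^\infty(G \setminus B(e,\epsilon))}.
\]
If
\[
\sup_{R>1}\|K_R^{(n-1)/2}\|_{L^\infty(G\setminus B(e,\epsilon))} = \infty, \qquad (\ast)
\]
then Banach--Steinhaus produces an $f \in L^1_\epsilon$ with $\sup_R |T_R(f)| = \infty$; since $R \mapsto \|T_R\|$ is continuous on bounded $R$-intervals, the sup is actually attained only along a sequence $R_k \to \infty$, giving $\limsup_R |S_R^{(n-1)/2}(f)(e)| = \infty$.

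The task thus reduces to establishing $(\ast)$ under the hypothesis $m \ge 2$. Here I would use Lemma~\ref{lem:poisson-delta} to write
\[
K_R^{(n-1)/2}(x) \;=\; \widetilde K_R^{(n-1)/2}(x) + O\!\left(\tfrac{1}{|D(x)|}\right),
\]
with the $O$-term $R$-uniform, so $(\ast)$ reduces to the $L^\infty$-unboundedness of the main term
\[
\widetilde K_R^{(n-1)/2}(\exp \xi) \;=\; C\,\mathds{1}_{Q_0}(\xi)\,\frac{\prod_{\alpha\in A}\langle\alpha,\xi\rangle}{D(\xi)}\,R^n\widetilde J_{n-1/2}(R|\xi|)
\]
on $G\setminus B(e,\epsilon)$. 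The Bessel asymptotic \eqref{eqn:bessel} exposes an oscillatory factor $\cos(R|\xi|-n\pi/2)/|\xi|^n$. The observation of Bochner \cite{Bochner1936} and Stein \cite{Stein1961}, in its Euclidean guise, is that in dimension $\ge 2$ one can invoke Kronecker's theorem on tuples of distances $\{|\xi + \gamma|\}_{\gamma \in \Gamma}$ to phase-align arbitrarily many cosines $\cos(R|\xi+\gamma|-\phi)$ at suitably chosen $(R,\xi)$, producing unbounded magnitudes of the kernel; this is the same mechanism that drives Lemmas~\ref{lem:kronecker} and \ref{lem:kolmogorov-2}. The rank hypothesis $m \ge 2$ supplies the extra independent torus directions in $\mathfrak t$ needed for this resonance; in rank one there is a single oscillation direction and the Weyl-denominator cancellation forces Mayer's positive localization result.

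The main obstacle is making this phase-alignment argument precise on $G$: although the full kernel $K_R^{(n-1)/2}$ is pointwise a.e.\ bounded (by Clerc's estimates), the $L^\infty$-norm over $G\setminus B(e,\epsilon)$ can still blow up along a sequence of near-singular points. One must track the interaction of the Weyl ratio $\prod\langle\alpha,\xi\rangle/D(\xi)$, the Bessel oscillation, and the Poisson-tail cancellations (responsible for smoothness of the kernel) and exhibit a sequence $(R_k,\xi_k)$ with $|\xi_k| \ge \epsilon$ and $R_k \to \infty$ for which these cancellations fail badly enough to yield $|K_{R_k}^{(n-1)/2}(\exp\xi_k)| \to \infty$. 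Identifying such a sequence---using that $m \ge 2$ gives a higher-dimensional family of singular walls at which to stage the Kronecker resonance---is the technical crux of the argument.
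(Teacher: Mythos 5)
Your functional-analytic setup (Banach--Steinhaus on the closed subspace of $L^1$ functions vanishing on $B(e,\epsilon)$, reducing to the unboundedness $(\ast)$ of the kernel on $G\setminus B(e,\epsilon)$) is exactly the paper's first step and is fine. The gap is in the next step: you reduce $(\ast)$ to the unboundedness of the main term $\widetilde K_R^{(n-1)/2}$, but that term is in fact \emph{uniformly bounded} on $G\setminus B(e,\epsilon)$. Indeed $\bigl|\prod_{\alpha\in A}\langle\alpha,\xi\rangle/D(\xi)\bigr|\preceq 1$ on $Q_0$ and $R^n|\widetilde J_{n-1/2}(R|\xi|)|\preceq (R|\xi|)^{-n}R^n=|\xi|^{-n}\preceq\epsilon^{-n}$ for $|\xi|\ge\epsilon$, uniformly in $R$. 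Meanwhile the term you discard as ``$R$-uniform $O(1/|D(x)|)$'' is \emph{not} uniformly bounded on $G\setminus B(e,\epsilon)$: $1/|D|$ blows up on the singular set of $G$, which meets $\{d(x,e)>\epsilon\}$. So you have thrown away precisely the part of the kernel responsible for the divergence, and no amount of phase alignment in the main term can recover $(\ast)$.

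The correct mechanism (and the one the paper uses) keeps the full Poisson sum $K_R^{(n-1)/2}(\exp\xi)=\frac{C}{D(\xi)}\sum_{\gamma\in\Gamma}\bigl(\prod_{\alpha}\langle\alpha,\xi+\gamma\rangle\bigr)R^n\widetilde J_{n-1/2}(R|\xi+\gamma|)$. For a.e.\ \emph{fixed} $\xi$, the Kronecker/almost-periodicity lemmas (Stein--Weiss, Lemmas 4.15 and 4.17) give
$\limsup_{R\to\infty}|K_R^{(n-1)/2}(\exp\xi)|=\frac{C}{|D(\xi)|}\sum_{\gamma}|\prod_\alpha\langle\alpha,\xi+\gamma\rangle|\,|\xi+\gamma|^{-n}$,
which is \emph{finite} for generic $\xi$ (consistent with the a.e.\ boundedness of $\sup_R|K_R^{\delta_0}|$ noted in the introduction) --- so aligning ``arbitrarily many cosines'' in $R$ is not where the blowup comes from. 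Rather, one keeps a single term $\gamma_0\in\Gamma\setminus\{0\}$ and observes that the resulting lower bound is $\succeq 1/\bigl|\prod_{\alpha:\langle\alpha,\gamma_0\rangle\neq 0}\langle\alpha,\xi\rangle\bigr|$ after cancelling against $D(\xi)$; this function of $\xi$ is unbounded on the annulus $\{\epsilon<|\xi|<2\epsilon\}$ exactly when $m\ge 2$, because then the annulus meets the root hyperplanes $\{\langle\alpha,\xi\rangle=0\}$ and $\xi$ can approach them while keeping $|\xi|\asymp\epsilon$. This is also where the rank hypothesis genuinely enters --- not, as you suggest, through ``extra independent torus directions for the resonance.'' Your proposal correctly flags that the crux is unproved; the above is the missing idea, and your intermediate reduction must be undone to implement it.
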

\begin{proof}
Fix a sufficiently small $\varepsilon>0$. Assume for a contradiction that $$\sup_{R>1} \big|S_{R}^{(n-1)/2}(f)(e)\big|<\infty$$ holds for all $f\in L^1(G)$ which vanish in $B(e,\varepsilon)$. Since
$$S_{R}^{(n-1)/2}(f)(e)=\int_{G\backslash B(e,\varepsilon)} K^{(n-1)/2}_R(x^{-1}) f(x)dx,$$
by the Banach-Steinhaus theorem, we then have $$\sup_{R>1,d(x,e)>\varepsilon} \big|K^{(n-1)/2}_R(x)\big|<\infty.$$
On the other hand, by the proof of Lemma \ref{lem:poisson-delta},
\begin{equation*}
K_R^{(n-1)/2}(\exp\xi)
=\frac{{C}}{D(\xi)} \sum_{\gamma\in\Gamma}
\Big(\prod_{\alpha\in A} {\langle \alpha,\xi+\gamma \rangle}\Big)
R^n  \widetilde J_{n-1/2}(R|\xi+\gamma|).
\end{equation*}
To obtain a contradiction we will show that
$$\sup_{\varepsilon<|\xi|<2\varepsilon}\limsup_{R\rightarrow\infty} \big|K_R^{(n-1)/2}(\exp\xi)\big|=\infty.$$
Indeed, using \eqref{eqn:bessel}, we can write
\begin{align*}
K_R^{(n-1)/2}(\exp\xi)
=&I+II\\
=&\frac{{C}}{D(\xi)}\sum_{\gamma\in\Gamma}
\frac{\prod_{\alpha\in A} {\langle \alpha,\xi+\gamma \rangle}}{|\xi+\gamma|^n}
\cos(R|\xi+\gamma|-\frac{n\pi}{2})\\
&+\frac{{C}}{D(\xi)} \sum_{\gamma\in\Gamma} \Big(\prod_{\alpha\in A} {\langle \alpha,\xi+\gamma \rangle}\Big)
O\left (\frac{1}{R|\xi+\gamma|^{n+1}}\right ),\ R\rightarrow\infty.
\end{align*}
It is easy to see that $\lim_{R\rightarrow\infty} II=0$ whenever $D(\xi)\neq 0$. On the other hand, by Lemma 4.15 and Lemma 4.17 of \cite{SteinWeiss1971}, we have
$$
\limsup_{R\rightarrow \infty} |I|
=\frac{{C}}{|D(\xi)|}\sum_{\gamma\in\Gamma} \frac{\big|\prod_{\alpha\in A} {\langle \alpha,\xi+\gamma \rangle}\big|}{|\xi+\gamma|^n},\ \text{a.e. } \xi.
$$
Choose $\gamma_0\in\Gamma\backslash\{0\}$ so that $|\langle \alpha,\gamma_0\rangle|\ge 2|\langle \alpha,\xi\rangle|$ whenever $\langle \alpha,\gamma_0\rangle\neq 0$. Then, bounding
$$|\langle \alpha,\xi+\gamma_0 \rangle|
\ge |\langle \alpha,\gamma_0 \rangle|-|\langle \alpha,\xi\rangle|
\ge \frac{1}{2}|\langle \alpha,\gamma_0 \rangle|,$$
we have
$$\limsup_{R\rightarrow \infty} |I|
\succeq \frac{\big|\prod_{\alpha\in A} {\langle \alpha,\xi+\gamma_0 \rangle}\big|}{|D(\xi)|}\\
\succeq \frac{\big|\prod_{\alpha\in A: \langle \alpha,\gamma_0\rangle=0} {\langle \alpha,\xi \rangle}\big|}{|D(\xi)|},\ \text{a.e. } \xi.$$
By \eqref{eqn:weyl}, we have $D(\xi)=C \prod\limits_{\alpha \in A}\sin\frac{\langle\alpha ,\xi \rangle}{2}$, and so
$$\frac{\big|\prod_{\alpha\in A: \langle \alpha,\gamma_0\rangle=0} {\langle \alpha,\xi \rangle}\big|}{|D(\xi)|}
\succeq \frac{1}{\big|\prod_{\alpha\in A: \langle \alpha,\gamma_0\rangle\neq 0} {\langle \alpha,\xi \rangle}\big|}.$$
Combining these we get
$$\sup_{\varepsilon<|\xi|<2\varepsilon}\limsup_{R\rightarrow\infty} \big|K_R^{(n-1)/2}(\exp\xi)\big|
\succeq\sup_{\varepsilon<|\xi|<2\varepsilon} 
\frac{1}{\big|\prod_{\alpha\in A: \langle \alpha,\gamma_0\rangle\neq 0} {\langle \alpha,\xi \rangle}\big|}
=\infty,$$
as desired.
\end{proof}

\begin{proposition}\label{ae-localization}
If $f\in L^1(G)$ vanishes in an open set $U\subset G$, then
$$\lim_{R\rightarrow\infty} S_{R}^{(n-1)/2}(f)(x)=0,\ \text{a.e. } x\in U.$$
\end{proposition}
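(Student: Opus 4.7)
The plan is to combine the Poisson-summation decomposition of $K_R^{\delta_0}$ from Lemma~\ref{lem:poisson-delta} (and its proof via \eqref{eqn:poisson}) with a Riemann--Lebesgue-type oscillatory-integral argument applied termwise to the resulting expansion.

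First, I would reduce to a compact sub-problem. Since $U = \bigcup_{\eta > 0} K_\eta$ with $K_\eta = \{x \in U : d(x, G\setminus U) \geq \eta\}$, a countable exhaustion in $\eta$ shows it suffices to fix $\eta > 0$ and prove that $\lim_{R\to\infty} S_R^{\delta_0}(f)(x) = 0$ for almost every $x \in K_\eta$. For such $x$, the integrand in
\[S_R^{\delta_0}(f)(x) = \int_{G\setminus U} K_R^{\delta_0}(xy^{-1}) f(y)\,dy\]
is supported on $\{y: d(x,y) \geq \eta\}$, so the kernel is evaluated only at $z = xy^{-1}$ with $d(z,e) \geq \eta$, a region where every lattice shift $|\xi+\gamma|$ stays bounded below off a measure-zero set.

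Next, I would invoke the Poisson formula as in the proof of Proposition~\ref{pointwise-localization} (which is \eqref{eqn:poisson} applied to $\varphi(r) = (1-r^2)_+^{\delta_0}$), giving
\[K_R^{\delta_0}(\exp\xi) = \frac{C}{D(\xi)} \sum_{\gamma \in \Gamma} \Big(\prod_{\alpha\in A} \langle \alpha, \xi+\gamma\rangle\Big) R^n \widetilde J_{n-1/2}(R|\xi+\gamma|),\]
and apply the Bessel asymptotic \eqref{eqn:bessel} to each summand. On the relevant region each summand becomes
\[a_\gamma(\xi) \cos\!\big(R|\xi+\gamma|-\tfrac{n\pi}{2}\big) + O\!\left(\frac{|\prod_\alpha \langle \alpha,\xi+\gamma\rangle|}{R\,|D(\xi)|\,|\xi+\gamma|^{n+1}}\right),\]
with $a_\gamma(\xi) = C\prod_\alpha \langle \alpha, \xi+\gamma\rangle/(D(\xi) |\xi+\gamma|^n)$, and the summability estimates already used in Lemma~\ref{lem:est-G} show the total remainder is $O(R^{-1}/|D(\xi)|)$ and $\sum_\gamma |a_\gamma(\xi)| \lesssim 1/|D(\xi)|$.

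Substituting back, for $x\in K_\eta$ I would obtain
\[S_R^{\delta_0}(f)(x) = \sum_{\gamma\in\Gamma} \mathrm{Re}\!\left[e^{-in\pi/2}\!\int_{G\setminus U} a_\gamma(\xi(xy^{-1}))\, e^{iR|\xi(xy^{-1})+\gamma|} f(y)\, dy\right] + O_x(R^{-1}),\]
where the remainder is finite for a.e. $x$ because $|D(\cdot)|^{-1} * |f|(x) < \infty$ a.e. by Young's inequality (as noted before Lemma~\ref{lem:mollifiers}). For each fixed $\gamma$, the phase $\psi_{\gamma,x}(y) = |\xi(xy^{-1})+\gamma|$ is real-analytic off a closed null set and nonconstant in $y$; Sard's theorem forces its critical values to form a Lebesgue-null subset of $\mathbb R$, so the pushforward $(\psi_{\gamma,x})_*\!\big(a_\gamma(\xi(xy^{-1}))f(y)\,dy\big)$ is absolutely continuous and Riemann--Lebesgue gives that the integral vanishes as $R\to\infty$ for a.e. $x \in K_\eta$. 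The majorant $\sum_\gamma |a_\gamma(\xi(xy^{-1}))| |f(y)| \lesssim |f(y)|/|D(xy^{-1})|$ is integrable in $y$ for a.e. $x$, so Lebesgue's dominated convergence for series pushes the limit inside the sum, yielding $\lim_R S_R^{\delta_0}(f)(x) = 0$ for a.e. $x \in K_\eta$; taking a countable union in $\eta$ finishes the proof.

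The main obstacle is making the term-by-term Riemann--Lebesgue step fully rigorous in the presence of the varying phase $\psi_{\gamma, x}$, in particular verifying that the critical points of $\psi_{\gamma,x}$ intersected with $\{f \ne 0\}$ do not carry enough $f$-mass to spoil the decay. The cleanest route is to approximate $f$ in $L^1$ by smooth compactly supported $f_k$ vanishing on $U$, for which non-stationary phase / integration by parts away from a small neighborhood of the critical set gives quantitative $O(R^{-N})$ decay, then use the dominated-convergence envelope $\sum_\gamma|a_\gamma(\xi(xy^{-1}))| \lesssim |D(xy^{-1})|^{-1}$ already at hand to control the approximation error uniformly in $R$.
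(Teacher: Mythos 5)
Your overall architecture (Poisson summation for $K_R^{\delta_0}$, Bessel asymptotics, isolate an oscillatory main term with phase $R|\xi+\gamma|$, kill it by Riemann--Lebesgue, and control everything by the a.e.\ finiteness of $|D(\cdot)|^{-1}*|f|$) matches the paper's, but there is a genuine gap in the Riemann--Lebesgue step as you have set it up. You keep the integration variable $y$ on the group, so the phase is $\psi_{\gamma,x}(y)=|\xi(xy^{-1})+\gamma|$, and you claim the pushforward of $a_\gamma(\xi(xy^{-1}))f(y)\,dy$ under $\psi_{\gamma,x}$ is absolutely continuous ``by Sard's theorem.'' Sard's theorem only says the set of critical \emph{values} is Lebesgue-null in $\mathbb R$; it does not prevent the pushforward from having an atom or a singular part, which happens exactly when the critical \emph{set} in $G$ carries positive $|f|\,dy$-mass (e.g.\ a level set of positive measure). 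To rule that out you would need $\psi_{\gamma,x}$ to be, say, real-analytic and nonconstant off a null set and then run a coarea argument — but $y\mapsto\xi(xy^{-1})$ is singular on the set of singular elements, and for $\gamma\neq 0$ the quantity $|\xi+\gamma|$ is not even well defined independently of the choice of $W$-representative of $\xi$ in $Q$ (only the full sum over $\gamma$ is $W$-invariant). None of this is addressed, and your proposed fallback (smooth approximation plus non-stationary phase) still requires the same unexamined control of the critical set. So the termwise Riemann--Lebesgue step is not established.

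The paper sidesteps all of this with one device you are missing: before invoking any asymptotics, it applies the Weyl integration formula to $y\mapsto f(y^{-1}x)$, writing $S_R^{\delta_0}(f)(x)=\frac{1}{|W|}\int_T K_R^{\delta_0}(t)F(x,t)|D(t)|^2\,dt$ with $F(x,t)=\int_{G/T}f(gt^{-1}g^{-1}x)\,d[g]$, and then unfolds the $\gamma$-sum by periodicity into a single integral over $\mathbb R^m$. The phase is then literally $R|\xi|$ with $\xi$ the integration variable, so after polar coordinates the classical one-dimensional Riemann--Lebesgue lemma applies verbatim; the only hypothesis needed is $\int_Q|F(x,\exp\xi)||D(\xi)|\,d\xi<\infty$, which is exactly $|D(\cdot)|^{-1}*|f|(x)<\infty$, i.e.\ your a.e.\ condition. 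I would recommend restructuring your proof around this reduction: it eliminates the variable phase, the pushforward issue, and the $W$-ambiguity in one stroke, and it also makes your compact-exhaustion preamble unnecessary (openness of $U$ already gives $F(x,\exp\xi)=0$ for $|\xi|\le\varepsilon$ at each fixed $x\in U$).
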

\begin{proof}
We will show that
$$\lim_{R\rightarrow\infty} S_{R}^{(n-1)/2}(f)(x)=0$$
whenever $x\in U$ satisfies
\begin{equation}\label{eqn:localizatino-condition}
\frac{1}{|D(\cdot)|}*|f|(x)<\infty.
\end{equation}
Note that by Young's inequality, ${|D(\cdot)|^{-1}}*|f|\in L^1(G)$; in particular \eqref{eqn:localizatino-condition} holds for almost every $x\in U$.

To prove the claim, we write
$$S_{R}^{(n-1)/2}(f)(x)=\int_{G} K^{(n-1)/2}_R(y) f(y^{-1}x)dy.$$
By the Weyl integration formula, this equals
\begin{equation}\label{eqn:general-weyl}
\frac{1}{|W|}\int_{T} K^{(n-1)/2}_R(t) F(x,t) |D(t)|^2 dt
\end{equation}
where
$$F(x,t)=\int_{G/T} f(gt^{-1}g^{-1}x) d[g].$$
By the proof of Lemma \ref{lem:poisson-delta}, we can write \eqref{eqn:general-weyl} as
\begin{align*}
C\sum_{\gamma\in\Gamma}\int_{Q} 
\frac{\prod_{\alpha\in A} {\langle \alpha,\xi+\gamma \rangle}}{D(\xi)} 
R^n  \widetilde J_{n-1/2}(R|\xi+\gamma|)
F(x,\exp \xi) |D(\xi)|^2 d\xi.
\end{align*}
By periodicity, this equals
\begin{equation}\label{eqn:riemann-lebesgue}
C\int_{\mathbb R^m}
\frac{\prod_{\alpha\in A} {\langle \alpha,\xi\rangle}}{D(\xi)} 
R^n  \widetilde J_{n-1/2}(R|\xi|)
F(x,\exp \xi) |D(\xi)|^2 d\xi.
\end{equation}
Since $U$ is an open set, there exists an $\varepsilon>0$ such that $F(x,\exp \xi)=0$ whenever $|\xi|\le\varepsilon$.
Using \eqref{eqn:bessel}, we can then write 
$$\eqref{eqn:riemann-lebesgue}=I+II$$ 
where
\begin{align*}
I=C\int_{|\xi|>\varepsilon}
\cos(R|\xi|-\frac{n\pi}{2})
\frac{\prod_{\alpha\in A} {\langle \alpha,\xi\rangle}}{{|\xi|^n}} 
F(x,\exp \xi) \frac{|D(\xi)|^2}{D(\xi)} d\xi
\end{align*}
and
\begin{align*}
|II|
\preceq\frac{1}{R}\int_{|\xi|>\varepsilon}
\frac{|\prod_{\alpha\in A} {\langle \alpha,\xi\rangle}|}{|\xi|^{n+1}} 
|F(x,\exp \xi)| |D(\xi)| d\xi,\ R\rightarrow\infty.
\end{align*}
Applying the Riemann-Lebesgue lemma to $I$, we see that
$$\lim_{R\rightarrow\infty} S_{R}^{(n-1)/2}(f)(x)=0$$
follows if 
$$\int_Q |F(x,\exp \xi)| |D(\xi)|  d\xi<\infty.$$
However, by the Weyl integration formula and \eqref{eqn:localizatino-condition},
\begin{align*}
\int_Q |F(x,\exp \xi)| |D(\xi)| d\xi
&=\int_Q \frac{|F(x,\exp \xi)|}{|D(\xi)|} |D(\xi)|^2 d\xi\\
&\preceq\int_T \int_{G/T} \frac{|f(gt^{-1}g^{-1}x)|}{|D(t)|} d[g] |D(t)|^2 dt\\
&=\int_G \frac{|f(y^{-1}x)|}{|D(y)|}dy\\
&=\frac{1}{|D(\cdot)|}*|f|(x)\\
&<\infty.
\end{align*}
This completes the proof.
\end{proof}

Finally, we remark that it is possible to generalize the results in this paper to the $n$-dimensional spheres $S^n$, $n\ge 2$; 
we hope to address this in future work. We also remark that, even though pointwise convergence may fail, the Bochner-Riesz mean of  $f\in L^1$ always converges to $f$ in measure; see Christ and Sogge \cite{ChristSogge1988}.\\

\noindent\textbf{Acknowledgment.} The second author is partially supported by the National Natural Science Foundation of China (No. 11471288).

\bibliographystyle{abbrv}
\bibliography{bibliography}
\nocite{Stein1958, Stein1983, Clerc1987, Zygmund2002, Katznelson2004,BonamiClerc1973}

\end{document}